\newif\ifalggeom\alggeomfalse
\theoremstyle{plain}
  \newtheorem{theorem}{Theorem}[section]
  \newtheorem{lemma}[theorem]{Lemma}
  \newtheorem{proposition}[theorem]{Proposition}
  \newtheorem{conjecture}[theorem]{Conjecture}
\theoremstyle{definition}
  \newtheorem{definition}[theorem]{Definition}
  \newtheorem*{definition*}{Definition}
\newtheorem*{acknowledgements}{Acknowledgements}\fi
\theoremstyle{remark}
  \newtheorem{remark}[theorem]{Remark}
  \newtheorem*{remark*}{Remark}
  \newtheorem{question}[theorem]{Question}
  \newtheorem{eggSample}[theorem]{Example}
      \newenvironment{example}[1][]{
        \begin{eggSample}[#1]}
        {\hfill$\diamond$\end{eggSample}}
\newcommand\notocsubsection[1]{\medskip\noindent\textbf{#1.}}
 \DeclareFontFamily{OT1}{pzc}{}                                 
 \DeclareFontShape{OT1}{pzc}{m}{it}{<-> s * [1.100] pzcmi7t}{}  
 \DeclareMathAlphabet{\mathpzc}{OT1}{pzc}{m}{it}                
\newcommand{\loc}{\mathit{loc}}
\newcommand{\Q}{\mathcal{Q}}
\DeclareMathOperator{\Hilb}{\underline{HS}}
\newcommand{\h}{h}
\DeclareMathOperator{\Br}{Br}
\newcommand{\tor}{\mathrm{tor}}
\DeclareMathOperator{\free}{free}
\DeclareMathOperator{\Lie}{Lie}
\renewcommand{\AA}{\mathbb{A}}
\newcommand{\CC}{\mathbb{C}}
\DeclareMathOperator{\Coh}{Coh}
\DeclareMathOperator{\qcoh}{QCoh}
\newcommand{\liset}{\text{\it lis-et}}
\newcommand{\m}{{\mathfrak{m}}}
\newcommand{\hhat}[1]{\widehat{#1}}
\newcommand{\hhatX}{\hspace{.7ex}\widehat{\phantom{\rule{2ex}{1.6ex}}}\hspace{-2.7ex}\X}
\newcommand{\ttilde}[1]{\widetilde{#1}}
\DeclareMathOperator{\spec}{Spec}
\DeclareMathOperator{\proj}{Proj}
\DeclareMathOperator{\Aut}{Aut}
\let\hom\relax \DeclareMathOperator{\hom}{Hom}
\DeclareMathOperator{\Hom}{\ensuremath{\mathscr{H}\kern-.35em\mathpzc{om}}}
\newcommand{\X}{\mathscr{X}}
\newcommand{\Y}{\mathscr{Y}}
\newcommand{\I}{\mathcal I}
\newcommand{\F}{\mathcal F}
\newcommand{\V}{\mathcal V}
\newcommand{\W}{\mathcal W}
\newcommand{\fF}{\mathfrak F}
\newcommand{\fG}{\mathfrak G}
\renewcommand{\O}{\mathcal O}
\newcommand{\ZZ}{\mathbb{Z}}
\newcommand{\GG}{\mathbb{G}}
\newcommand{\G}{\mathcal G}
\newcommand{\fM}{\mathfrak{M}}
\newcommand{\llparen}{\ensuremath{\mbox{$(\!($}}}
\newcommand{\rrparen}{\ensuremath{\mbox{$)\!)$}}}
\renewcommand{\mod}{\mathrm{\text{-}mod}}
\newcommand{\setlabel}[1]{\phantomsection\label{#1}{\rm (#1)}}
\newcommand{\myref}[1]{\hyperref[#1]{\rm (#1)}}
\begin{document}
\title{Formal GAGA for Good Moduli Spaces}
\subjclass[2010]{Primary 14A20; Secondary 14L15, 14L24, 14L30}
\keywords{algebraic stacks, moduli space}
\author{Anton Geraschenko}
\address{Google, 340 Main St., Venice, CA 90291 USA}
\email{geraschenko@gmail.com}
\author{David Zureick-Brown}
\address{Department of Mathematics and Computer Science, Emory University, Atlanta, GA 30322 USA}
\email{dzb@mathcs.emory.edu}
\date{}

\begin{abstract}
  We prove formal GAGA for good moduli space morphisms under an assumption of ``enough   vector bundles'' (which holds for instance for quotient stacks). This supports the philosophy that though they are non-separated, good moduli space morphisms largely behave like proper morphisms.
\end{abstract}
\maketitle

\section{Introduction}

Good moduli space morphisms are a common generalization of good quotients by linearly reductive group schemes \cite{git} and coarse moduli spaces of tame Artin stacks \cite[Definition 3.1]{AbramovichOV:Tame}.
\begin{definition*}[{\cite[Definition 4.1]{Alper:good}}]
 A quasi-compact and quasi-separated morphism of locally Noetherian algebraic stacks $\phi\colon \X\to \Y$ is a \emph{good moduli space morphism} if
 \begin{itemize}
  \item ($\phi$ is \emph{Stein}) the morphism $\O_\Y\to \phi_*\O_\X$ is an isomorphism, and
  \item ($\phi$ is \emph{cohomologically affine}) the functor $\phi_*\colon \qcoh(\O_\X)\to \qcoh(\O_\Y)$ is exact.
 \end{itemize}
\end{definition*}
If $\phi\colon \X \to \Y$ is such a morphism, then any morphism from $\X$ to an algebraic space factors through $\phi$ \cite[Theorem 6.6]{Alper:good}.\footnote{If $\Y$ is an algebraic space then this is \cite[Theorem 6.6]{Alper:good}.
More generally, since algebraic spaces are sheaves in the smooth topology, this property may be checked smooth locally on $\Y$, and since
good moduli space morphisms are stable under base change \cite[Proposition 4.7(i)]{Alper:good}, this follows from the case of $\Y$ an algebraic space.} 
In particular, if there exists a good moduli space morphism $\phi\colon \X\to X$ where $X$ is an algebraic space, then $X$ is determined up to unique isomorphism. In this case, $X$ is said to be \emph{the good moduli space} of $\X$. If $\X=[U/G]$, this corresponds to $X$ being a good quotient of $U$ by $G$ in the sense of \cite{git} (e.g.~for a linearly reductive $G$, $[\spec R/G]\to \spec R^G$ is a good moduli space).

In many respects, good moduli space morphisms behave like proper morphisms. They are universally closed \cite[Theorem 4.16(ii)]{Alper:good} and weakly separated \cite[Proposition 2.17]{ASW:weakly-proper}, but since points of $\X$ can have non-proper stabilizer groups, good moduli space morphisms are generally not separated (e.g.~if $G$ is a non-proper group scheme, $BG$ is not separated). Pushforward along a good moduli space morphism respects coherence \cite[Theorem 4.16(x)]{Alper:good}. 

The main theorem in this paper continues this philosophy, showing that formal GAGA holds for good moduli space morphisms, at least when the stack has ``enough vector bundles.'' Recall that a stack is said to have \emph{the resolution property} if every coherent sheaf has a surjection from a vector bundle. Recall also that if $\X\to X$ is a good moduli space morphism and $X$ has a unique closed point, then $\X$ also has a unique closed point \cite[Theorem 4.16(iii) and Proposition 9.1]{Alper:good}.

\begin{theorem}\label{T:main}
 \renewcommand{\leftmarginii}{4em}
 Suppose $\X\to \spec A$ is a good moduli space, where $A$ is a complete Noetherian local ring with maximal ideal $\m$ and $\X$ is of finite type over $\spec A$. Let $\hhatX$ denote the formal completion of $\X$ with respect to $\m$ (see \S\ref{S:terminology}).
 \begin{enumerate}[i.]
   \item\label{item:full-faithfulness} The completion functor $\Coh(\X)\to \Coh(\hhatX)$ is fully faithful.
   \item\label{item:existence} Suppose $\X_0=\X\times_{\spec A}\spec A/\m$ has the resolution property (e.g.~$\X_0$ is a quotient stack; see Remark \ref{R:affine-gms+quot=>res}). Then the following conditions are equivalent:
   \begin{enumerate}
    \item[\setlabel{quot}] $\X$ is the quotient of an affine scheme by $GL_n$ for some $n$.
    \item[\setlabel{quot$'$}] $\X$ is the quotient of an algebraic space by an affine algebraic group.
   \end{enumerate}
   The above conditions imply the following equivalent conditions: 
   \begin{enumerate}
    \item[\setlabel{res}] $\X$ has the resolution property.
    \item[\setlabel{res$'$}] Every coherent sheaf on $\X_0$ has a surjection from a vector bundle on $\X$.
   \end{enumerate}
   The above conditions imply
   \begin{enumerate}
    \item[\setlabel{GAGA}] The completion functor $\Coh(\X)\to \Coh(\hhatX)$ is an equivalence.
   \end{enumerate}
   If the unique closed point of $\X$ has affine stabilizer group then \myref{res} implies \myref{quot$'$}, and if $\X$ has affine diagonal then \myref{GAGA} implies \myref{res$'$}.
 \end{enumerate}
\end{theorem}

We provide examples in \S\ref{S:counterexamples} to show that \myref{GAGA} may fail under weaker hypotheses.

\begin{remark}
\label{R:AHR}
  As this paper went to press, we learned of a forthcoming result by Jarod Alper, Jack Hall, and David Rydh that implies many stacks satisfy \myref{quot} after an \'etale base change on their good moduli spaces. Combined with our Theorems \ref{T:main} and \ref{T:etale-local}, it implies that if $\X$ has affine diagonal, all the conditions in Theorem \ref{T:main} hold (see Remark \ref{R:affine-diag=>GAGA} and Conjecture \ref{conjecture}).
\end{remark}

\begin{remark}
  In \cite[Theorem 1.4]{MO:proper} (see also \cite{BC:gaga}), Olsson proves that formal GAGA holds for \emph{proper}
  Artin stacks. His main theorem gives a proper surjection from a proper scheme $X \to \X$, and formal GAGA   follows from a d\'evissage (as outlined in   \cite[\S 1.2]{HallR:hilbertStack}). In our setting such a surjection does   not exist, and our arguments are quite different.
\end{remark}

\begin{remark}
  If $\X$ has quasi-finite diagonal over a base $S$, the Hilbert stack $\Hilb_{\X/S}$ of quasi-finite representable $S$-maps with domain a proper $S$-stack is an algebraic stack \cite[Theorem 2]{HallR:hilbertStack}.   A key ingredient in the proof of this result is a weaker variant of formal GAGA for non-separated stacks.
\end{remark}

\begin{remark}\label{R:template}
  Formal GAGA allows the study of a stack $\X$ with good   moduli space $X$ to be largely reduced to the study of the fibers of the map   $\X\to X$. This reduction is particularly appealing since it is possible that the geometric fibers of this map must be quotient stacks (see Question \ref{Q:non-quotient_with_one_point_GMS} and Remark \ref{R:Jarod-conj=>question}). Here is the template for the reduction:
  \begin{enumerate}
  \item[(0)] \emph{Start with a problem which is \'etale local on $X$, and a solution to the problem for the fiber over a point $x$.}
  \item \emph{Use deformation theory to extend the solution to a formal solution}. Deformation theory typically shows that the problem of extending a solution from an infinitesimal neighborhood to a larger infinitesimal neighborhood is controlled by the cohomologies of certain quasi-coherent sheaves. If all higher cohomology groups of $\X\to \spec A$ vanish (e.g.~if $\X$ has affine diagonal; see Remark \ref{R:cohomological-dimension-error}), then deformation-theoretic problems are more or less trivial when working with good moduli space morphisms. (See Lemma \ref{L:vectorUnicity} as an example of this.)
  \item \emph{Show that any formal solution is effectivizable}. That is, show that any compatible family of solutions over all infinitesimal neighborhoods of $x\in X$ is induced by a solution over $\spec \hhat\O_{X,x}$. If the question can be formulated entirely in terms of coherent sheaves, as is often the case, then \myref{GAGA} does this step.
  \item \emph{Use Artin approximation \cite[Theorem 1.12]{artin:approximation} to extend the solution to an \'etale neighborhood of $x$.} If the stack of solutions is locally finitely presented, Artin's theorem  says that for a map $f$ from the complete local ring at a point, there is a map from the henselization of the local ring which agrees with $f$ modulo any given power of the maximal ideal. (By \cite[Proposition 2.3.8]{lieblichOsserman:functorialReconstruction}, one can instead apply Artin's theorem to the associated functor of isomorphism classes.) By step 1 (uniqueness of deformations) and formal GAGA, this must actually be an extension of $f$. By local finite presentation, this map extends to some \'etale neighborhood, as the henselization is the limit of all \'etale neighborhoods.
  \end{enumerate}
  Proposition \ref{P:localQuotient} illustrates this template. It shows that if $\X \to X$ is a good moduli space, $x \in X$ is a point at which formal GAGA holds, and the fiber over $x$ is a quotient   stack, then there is some \'etale neighborhood of $x$ over which $\X$ is a quotient stack.
\end{remark}

\begin{remark}[Related work]
\label{R:related-work}

Previous work \cite[Theorem 1.7]{alexeevB:moduliOfAffineSchemes},  \cite[Theorem 7.6]{alexeevB:stableReductiveI}, and \cite[Theorem 2.20]{brion:invariantHilberg} proves that the Hilbert-scheme of $G$-equivariant multiplicity-finite subschemes of an affine scheme exists when $G$ is connected reductive and of characteristic 0.
Working over the spectrum $X$ of a complete local ring, let $V \to X$ be an affine morphism with an action of a linearly reductive group $G$. Then formal GAGA holds for flat closed substacks $\mathcal{Z}$ of $\X = [V/G]$ whose good moduli space $Z$ is finite over $X$. The characteristic $p$ case (with linearly reductive instead of reductive) follows similarly from the existence of the multigraded Hilbert scheme \cite{haimanS:multigradedHilbert}, since  there are few linearly reductive group schemes in characteristic $p$ -- any such scheme is the extension of a linear reductive finite flat group scheme $G$ by a torus, and \cite{AbramovichOV:Tame} classified all such $G$.

The present work is a natural and direct proof of formal GAGA, extending this previous work to non-flat substacks and to arbitrary coherent sheaves.  While we work with the more restrictive hypothesis that $Z = X$, David Rydh has pointed out that it is easy to modify our argument to allow for stacks with separated good moduli space of finite type over $X$ and closed substacks $\mathcal{Z}$ whose good moduli space $Z$ is proper over $X$ (and similarly for coherent sheaves). Finally, while our work allows for more general stacks, in the main interesting case where $\X$ has affine stabilizers, our main theorem gives that (GAGA) $\Leftrightarrow$ (res) $\Leftrightarrow$ $\X=[V/G]$ with $V$ affine and $G=GL_n$.

\end{remark}

\begin{acknowledgements}
  We thank Jarod Alper, who introduced us to the problem and generously provided valuable feedback throughout our work on it. We   also thank Jack Hall, Martin Olsson, David Rydh, and Matt Satriano for many helpful discussions. We thank
\href{http://mathoverflow.net/questions/727/algebraic-group-g-vs-algebraic-stack-bg/1332#1332}{David Ben-Zvi},
\href{http://mathoverflow.net/questions/1809/does-every-morphism-bg-bh-come-from-a-homomorphism-g-h/1819#1819}{Bhargav Bhatt},
\href{http://mathoverflow.net/questions/3911/constructing-a-degeneration-as-a-group-scheme-of-g-m-to-g-a/3918#3918}{Scott Carnahan},
Torsten Ekedahl,
\href{http://mathoverflow.net/questions/4062/can-hom-gpg-h-fail-to-be-representable-for-affine-algebraic-groups/4105#4105}{David} \href{http://mathoverflow.net/questions/2710/can-isomorphisms-of-schemes-be-constructed-on-formal-neighborhoods/2720#2720}{Speyer},
\href{http://mathoverflow.net/questions/56112/are-representations-of-a-linearly-reductive-group-discretely-parameterized/56159#56159}{Angelo} \href{http://mathoverflow.net/questions/56862/is-there-a-non-quotient-stack-with-affine-st}{Vistoli},
\href{http://mathoverflow.net/questions/727/algebraic-group-g-vs-algebraic-stack-bg/746#746}{Ben Webster}, and
\href{http://mathoverflow.net/questions/570/deformation-theory-of-representations-of-an-algebraic-group/3187#3187}{Jonathan Wise}
for useful discussions on \href{http://mathoverflow.net}{MathOverflow}.
We especially thank the anonymous referee, who provided extensive useful comments, and thank Jack Hall and David Rydh a second time for very detailed comments, and the correspondence about related work, which became Remark \ref{R:related-work}.
The second author was partially supported by a National Defense Science and Engineering Graduate Fellowship and by a National Security Agency Young Investigator grant.
\end{acknowledgements}

%

\tableofcontents


\section{Terminology}\label{S:terminology}
This paper follows the conventions of \cite{Alper:good}. In particular, all schemes are assumed to be quasi-separated, stacks have quasi-compact diagonal,  all morphisms of stacks are assumed to be quasi-compact and quasi-separated. We fix a base scheme $X$, which will often be isomorphic to $\spec A$ where $A$ is a complete Noetherian local ring with maximal ideal $\m$. An affine algebraic group is understood to be a flat (over $X$) subgroup scheme of $GL_n := GL_{n,X}$, and a quotient stack $[U/G]$ is the quotient of an algebraic space $U \to X$ by an affine algebraic group over $S$; in particular, $BG$ always denotes the quotient $[X/G]$ as a stack over $X$.

Throughout the paper $\X$ is an algebraic stack over $X$; unless otherwise indicated, the map $\X \to X = \spec A$ is a good moduli space morphism and is of finite type. We denote by $\X_{\liset}$ the lisse-\'etale topos of $\X$ and define $\hhat\O_\X$ to be the completion $\varprojlim \O_\X/\I^n$, where $\I$ is the sheaf of ideals generated by the pullback of $\m\subseteq A$. Following \cite[\S1]{BC:gaga}, we define the ringed topos $\hhatX$ to be the pair $(\X_{\liset},\hhat\O_\X)$.
There is a natural completion functor
\[
 \Coh(\X)\to \Coh(\hhatX),\qquad
 \F \mapsto \hhat\F := \varprojlim \F/\I^{n+1}\F.
\]
Letting $\X_n=\X\times_{\spec A}\spec A/\m^{n+1}$, the natural functor $\Coh(\hhatX)\to \varprojlim \Coh(\X_n)$ is an equivalence of categories \cite[Theorem 2.3]{BC:gaga}, where the map $\Coh(\X_n)\to \Coh(\X_{n-1})$ is given by pullback along the closed immersion $\X_{n-1}\to \X_n$. We may therefore regard elements of $\Coh(\hhatX)$ as compatible systems of coherent sheaves on the $\X_n$.

\section{Proof of Theorem \ref{T:main}}\label{S:proof-of-main}

This section is quite technically involved. Subsequent sections depend on the results but not on the techniques or terminology developed in this section.
\medskip

We use the terminology of topoi developed in \cite{SGA4}. A morphism of topoi $f\colon Y\to X$ is a triple $(f_*,f^{-1},\alpha)$, where $f^{-1}\colon X \to Y$ is a functor which commutes with finite limits, $f_*\colon Y \to X$ is a functor, and $\alpha$ is an adjunction $\hom_Y(f^{-1}(-),-) \xrightarrow{\sim} \hom_X(-,f_*(-))$. If $\O_Y$ and $\O_X$ are sheaves of rings on $Y$ and $X$,
respectively, then a morphism of ringed topoi (also denoted $f\colon
Y\to X$) is a morphism of topoi, together with a morphism of sheaves of rings
$f^{-1}\O_X\to \O_Y$. In this case, $f_*\colon \O_Y\mod\to\O_X\mod$ is right adjoint to $f^*(-)=f^{-1}(-)\otimes_{f^{-1}\O_X}\O_Y$.

\begin{definition}
 A morphism of ringed topoi $f\colon Y\to X$ is \emph{flat} if $f^*$ is exact.
\end{definition}


\begin{lemma}\label{L:flat=>Hom_isom}
  If $f\colon Y\to X$ is a flat morphism of ringed topoi, $\F$ is a locally finitely presented $\O_X$-module, and $\G$ is any $\O_X$-module, then the natural map $f^*\Hom_{\O_X}(\F,\G)\to \Hom_{\O_Y}(f^*\F,f^*\G)$ is an isomorphism.
\end{lemma}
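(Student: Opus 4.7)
The plan is to use a local free presentation of $\F$, apply the functors $\Hom_{\O_X}(-,\G)$ and $f^*$ in both orders, and then reduce by the five lemma to checking the claim for free $\O_X$-modules. The desired isomorphism is a morphism of $\O_Y$-modules on $Y$, and $f^{-1}$ sends covers of $X$ to covers of $Y$, so the claim is local on $X$; thus I may assume there is a presentation $\O_X^J \to \O_X^I \to \F \to 0$ by free $\O_X$-modules, coming from the definition of quasi-coherence in a ringed topos.

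First apply $\Hom_{\O_X}(-,\G)$ to the presentation and then apply $f^*$. The former is left exact and the latter is exact by flatness, so the result is the left-exact sequence
$$0 \to f^*\Hom_{\O_X}(\F,\G) \to f^*\Hom_{\O_X}(\O_X^I,\G) \to f^*\Hom_{\O_X}(\O_X^J,\G).$$
In the other order, $f^*$ commutes with direct sums and is right exact as a left adjoint, so applying it to the presentation gives $\O_Y^J \to \O_Y^I \to f^*\F \to 0$, and then $\Hom_{\O_Y}(-, f^*\G)$ produces the parallel left-exact sequence
$$0 \to \Hom_{\O_Y}(f^*\F, f^*\G) \to \Hom_{\O_Y}(\O_Y^I, f^*\G) \to \Hom_{\O_Y}(\O_Y^J, f^*\G).$$
The natural map of the lemma assembles into a morphism between these two sequences, so by the five lemma it suffices to establish that the natural map $f^* \Hom_{\O_X}(\O_X^K, \G) \to \Hom_{\O_Y}(\O_Y^K, f^*\G)$ is an isomorphism for every set $K$.

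For finite $K$ this is routine: both sides identify with $(f^*\G)^K$ using that $f^*$ commutes with finite limits (by flatness) and with finite direct sums. The main obstacle will be the case of infinite $K$: the left-hand side is $f^*(\prod_K \G)$ while the right-hand side is $\prod_K f^*\G$, and these are not equal for general flat $f$ between ringed topoi. In the setting of the paper (lisse-\'etale topoi of locally Noetherian algebraic stacks), I would attempt to resolve this either by restricting to a smooth affine chart so that $f^*$ is modelled by a flat base change of modules and appealing to classical commutation results, or by writing $\F$ as a filtered colimit of finitely presented quasi-coherent subsheaves --- which is available under the Noetherian hypothesis --- and then exchanging this colimit with the remaining functors, using flatness to control the exchange. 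This exchange is the genuine technical point, and is where I expect the heart of the proof to lie; the rest is a diagram chase.
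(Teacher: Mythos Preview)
Your approach is essentially the paper's: it also reduces to the free case via a presentation, uses left exactness of $\Hom(-,\G)$ together with exactness of $f^*$ to set up a five-lemma comparison, and localizes to obtain the presentation. The only organizational difference is that the paper treats the free case first (Case~1), then the globally presented case (Case~2), then localizes (Case~3), whereas you localize at the outset.

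Your concern about infinite $K$ is well placed, and in fact it is more than a technicality: the lemma as stated is \emph{false} for general quasi-coherent $\F$. Take $X=\spec k$, $Y=\spec k[t]$, $f$ the structure map (flat), $\F=\O_X^{(\mathbb N)}$, $\G=\O_X$. Then the natural map on global sections is $k^{\mathbb N}\otimes_k k[t]\to k[t]^{\mathbb N}$, which misses the sequence $(1,t,t^2,\dots)$. The paper's Case~1 asserts that for $\F=\O_X^{\oplus I}$ the map is ``the identity on $f^*\G^{\oplus I}$,'' which is only correct for finite $I$; for infinite $I$ both sides are products, and $f^*$ need not commute with those. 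Neither of your proposed workarounds repairs this: the counterexample is already affine, so passing to charts does not help; and writing $\F=\varinjlim \F_\alpha$ with $\F_\alpha$ finitely presented turns the problem into showing $f^*(\varprojlim \Hom(\F_\alpha,\G))\cong \varprojlim f^*\Hom(\F_\alpha,\G)$, which is again a commutation of $f^*$ with an inverse limit.

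What saves the paper is that the lemma is only ever applied with $\F$ \emph{coherent} on a Noetherian stack (in the proof of Theorem~\ref{T:main}(\ref{item:full-faithfulness})), so $\F$ is locally finitely presented and one may take $I$ and $J$ finite. With that hypothesis your argument and the paper's go through verbatim. So: your diagram chase is exactly right, your diagnosis of the obstruction is sharper than the paper's own treatment, and the correct fix is to strengthen the hypothesis on $\F$ to finite presentation rather than to look for a way around the infinite case.
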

\begin{proof}
  \underline{Case 1}: If $\F\cong \O_X$, the natural map is isomorphic to the identity map on $f^*\G$. Similarly, if $\F\cong \O_X^{\oplus I}$, the map is isomorphic to the canonical isomorphism $f^*(\G^{\oplus I}) \to (f^*\G)^{\oplus I}$.

  \underline{Case 2}: Suppose $\F$ has a global presentation
  \[
    \O_X^{\oplus J}\to \O_X^{\oplus I}\to \F\to 0.
  \]
  Since $f^*$ is right exact, we get a global presentation
  \[
    \O_Y^{\oplus J}\to \O_Y^{\oplus I}\to f^*\F\to 0.
  \]
  Applying $\Hom_{\O_X}(-,\G)$ to the first sequence and
  $\Hom_{\O_Y}(-,f^*\G)$ to the second, we get the exact sequences
  \[\xymatrix@R-1.5pc @C-1.3pc{
    0\ar[r] & \Hom_{\O_X}(\F,\G) \ar[r] & \Hom_{\O_X}(\O_X^{\oplus I},\G) \ar[r] & \Hom_{\O_X}(\O_X^{\oplus J},\G) \\
    0\ar[r] & \Hom_{\O_Y}(f^*\F,f^*\G) \ar[r] &
    \Hom_{\O_Y}(\O_Y^{\oplus I},f^*\G) \ar[r] &
    \Hom_{\O_Y}(\O_Y^{\oplus J},f^*\G).
  }\]
  Since $f$ is flat, the first sequence remains exact if we apply $f^*$, so the rows in the following diagram are exact. The squares commute by naturality of the vertical arrows.
  \[\xymatrix@C-1.3pc{
    0\ar[r] & f^*\Hom_{\O_X}(\F,\G) \ar[r]\ar[d] & f^*\Hom_{\O_X}(\O_X^{\oplus I},\G) \ar[r]\ar[d]^\wr & f^*\Hom_{\O_X}(\O_X^{\oplus J},\G)\ar[d]^\wr \\
    0\ar[r] & \Hom_{\O_Y}(f^*\F,f^*\G) \ar[r] &
    \Hom_{\O_Y}(\O_Y^{\oplus I},f^*\G) \ar[r] &
    \Hom_{\O_Y}(\O_Y^{\oplus J},f^*\G).
  }\]
  We have already shown that the middle and right vertical arrows are isomorphisms, so the left vertical arrow must also be an isomorphism, completing the proof in the case where $\F$ is globally presented.

  \underline{Case 3}: Now we prove the general case. To check that the natural map $f^*\Hom_{\O_X}(\F,\G)\to\Hom_{\O_Y}(f^*\F,f^*\G)$ is an isomorphism, it is enough to find a cover of the final object of $Y$ so that it pulls back to an isomorphism. Since $\F$ is quasi-coherent, there is a cover of the final object of $X$ so that the pullback of $\F$ has a presentation. Pulling that cover back along $f$, we get a cover of the final object of $Y$ (here we're using exactness of $f^{-1}$ to say that the final object pulls back to the final object and that covers pull back to covers on canonical sites). On that cover, the map is an isomorphism by case 2. The construction of $\Hom$, the application of $f^*$, and the construction of the natural map are local on $X$, so the natural morphism constructed on the cover is the restriction of the natural morphism on $Y$.
\end{proof}

\begin{lemma}\label{L:completion_is_flat}
  If $\X$ is a Noetherian algebraic stack and $\I\subseteq \O_\X$ is a quasi-coherent sheaf of ideals, then $\hhat \O_\X$, the completion of $\O_\X$ with respect to $\I$, is flat over $\O_\X$. That is, the canonical map $\iota\colon\hhatX\to\X$ is a flat morphism of ringed topoi.
\end{lemma}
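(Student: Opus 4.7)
The plan is to reduce to the classical commutative-algebra fact that the $I$-adic completion $\hat R$ of a Noetherian ring $R$ is flat over $R$ (Atiyah--Macdonald, Theorem 10.14; or EGA 0, 7.3.3.1). Since $\iota$ is the identity on underlying topoi and $\iota^*(-)=-\otimes_{\O_\X}\hhat\O_\X$, flatness of $\iota$ amounts to saying $\hhat\O_\X$ is a flat $\O_\X$-module, which can be detected after smooth pullback to an atlas of $\X$. I would therefore choose a smooth surjection $p\colon U\to \X$ with $U=\spec R$ Noetherian affine, and let $I\subseteq R$ correspond to $p^{-1}\I$.

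The crux of the argument is to identify $p^*\hhat\O_\X$ with the quasi-coherent sheaf $\widetilde{\hat R}$ associated to $\hat R = \varprojlim R/I^{n+1}$. Using the equivalence $\Coh(\hhatX)\simeq \varprojlim\Coh(\X_n)$ recalled in \S\ref{S:terminology}, represent $\hhat\O_\X$ by the compatible system $\{\O_\X/\I^{n+1}\}$. Smooth pullback along $p$ sends this to the system $\{\widetilde{R/I^{n+1}}\}$ on $U$, whose inverse limit in $\qcoh(U)$ is $\widetilde{\hat R}$. The transition maps in $\{R/I^{n+1}\}$ are surjective, so the system is Mittag--Leffler, and this is what lets one interchange $p^*$ with the inverse limit defining $\hhat\O_\X$. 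Once the identification $p^*\hhat\O_\X\cong\widetilde{\hat R}$ is in hand, flatness of $\hhat\O_\X$ over $\O_\X$ follows by smooth descent from the classical flatness of $\hat R$ over $R$: for any $\O_\X$-module $\F$, the pullback of $\F\otimes_{\O_\X}\hhat\O_\X$ along $p$ is $p^*\F\otimes_{\O_U}\widetilde{\hat R}$, which is exact in $\F$ because $p$ is flat and $\hat R$ is flat over $R$.

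The main obstacle, and where real care is required, is this identification step: $\hhat\O_\X$ is defined as an inverse limit in the lisse-\'etale topos rather than in $\qcoh(\X)$, and while smooth pullback is exact it need not commute with arbitrary inverse limits. The Mittag--Leffler property of the transitions is what saves the interchange, but the verification must be spelled out; it seems cleanest to route it through the equivalence with compatible systems on the $\X_n$ and check that smooth pullback is compatible with both sides of that equivalence. As an alternative, one could work stalkwise on the lisse-\'etale topos and invoke the analogous flatness statement for completions of Noetherian local rings, but the smooth-cover route keeps the argument closer to the coherent-sheaf formalism already established in the paper.
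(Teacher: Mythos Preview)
Your identification $p^*\hhat\O_\X\cong\widetilde{\hat R}$ is where the argument breaks, and the Mittag--Leffler claim does not rescue it. The sheaf $\hhat\O_\X$ is \emph{not} quasi-coherent: for an affine $V=\spec S$ smooth over $U=\spec R$, one has $\hhat\O_\X(V)=\varprojlim S/I^{n+1}S=\hat S$, whereas $\widetilde{\hat R}(V)=\hat R\otimes_R S$, and these differ already for $S=R[x]$ (the $I$-adic completion of $R[x]$ contains elements whose reductions mod $I^{n+1}$ have unbounded degree). So the restriction of $\hhat\O_\X$ to $U$ is $\hhat\O_U$, not $\widetilde{\hat R}$. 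Mittag--Leffler controls $\varprojlim^1$ in abelian categories; it does not make a left adjoint such as $p^*$ commute with inverse limits, which is what you are asserting. Your route through the equivalence $\Coh(\hhatX)\simeq\varprojlim\Coh(\X_n)$ also does not help, since that equivalence concerns $\hhat\O_\X$-modules, not the $\O_\X$-module structure you need for flatness over $\O_\X$.

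The paper sidesteps this entirely by working at the level of sections rather than pullback. Taking sections over an object is a right adjoint, so it \emph{does} commute with the inverse limit: for $U=\spec R$ affine and smooth over $\X$ one gets $\hhat\O_\X(U)=\varprojlim R/I^{n+1}=\hat R$ on the nose. Since the sheaf tensor product $\F\otimes_{\O_\X}\hhat\O_\X$ is the sheafification of $U\mapsto \F(U)\otimes_{\O_\X(U)}\hhat\O_\X(U)$ and sheafification is exact, it suffices to check injectivity of $\F(U)\otimes_R\hat R\to\F'(U)\otimes_R\hat R$ for $U$ ranging over an affine base of the site; this is immediate from flatness of $\hat R$ over $R$. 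Your instinct to reduce to the classical fact is right, but the reduction must go through sections, not through an identification of the completed structure sheaf with a quasi-coherent sheaf on an atlas.
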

\begin{proof}
  Let $\F \to \F'$ be an injection of $\O_{\X}$-modules. We
  need to check injectivity of the map
  \[
    \F \otimes_{\O_{\X}} \hhat \O_{\X} \to \F' \otimes_{\O_{\X}} \hhat \O_{\X}.
  \]
   Since sheafification is exact, it suffices to check injectivity of the maps
  \[
    \F(U) \otimes_{\O_{\X}(U)} \hhat \O_{\X}(U) \to \F'(U) \otimes_{\O_{\X}(U)} \hhat \O_{\X}(U)
  \]
  as $U$ varies over a base for $\X_{\liset}$. Thus it suffices
  to check that the above maps are injections for $f\colon U \to \X$ a
  smooth map and $U$ an affine scheme. By definition, $\O_{\X}(U) = \O_U(U)$ and $\hhat\O_\X(U)=\hhat\O_U(U)$. Since $U$ is affine, $\hhat\O_U(U) = \hhat {\O_U(U)}$. Injectivity follows since $\hhat {\O_U(U)}$ is flat over $\O_U(U)$ \cite[Theorem 7.2b]{Eisenbud}.
\end{proof}

\begin{remark}\label{R:pullback=completion}
  The same trick of restricting to affine schemes smooth over $\X$ shows that for any coherent sheaf $\F$ on $\X$, the natural map $\iota^*\F\to \hhat\F$ is an isomorphism. (Note however that this is not true for \emph{quasi-}coherent sheaves.)
\end{remark}

\begin{remark}\label{R:completion-is-exact}
  Lemma \ref{L:completion_is_flat} and Remark \ref{R:pullback=completion} show that completion of coherent sheaves is exact.
\end{remark}

\begin{lemma}\label{L:vectorUnicity}
 Suppose $\phi\colon\X\to \spec A$ is a good moduli space, where $A$ is a complete Noetherian local ring with maximal ideal $\m$. Additionally assume that $\phi$ has cohomological dimension 0. Then any vector bundle $\V$ on $\X_{n-1}$ is the reduction of a unique vector bundle on $\X_n$. In particular, any vector bundle on $\X_0$ extends to a unique vector bundle on $\hhatX$.
\end{lemma}

\begin{remark}[Cohomological dimension of cohomologically affine morphisms]
\label{R:cohomological-dimension-error}
If $\X$ has affine diagonal  then $\phi$ has cohomological dimension 0 -- i.e.~ $R^i\phi_* = 0$ for $i>0$.  Indeed, by \cite[Remark 3.5]{Alper:good}), cohomologically affine stacks with non-affine diagonal are \emph{not} cohomologically of dimension 0. The reason is that the morphism of triangulated categories
\[
D^+(\qcoh(X)) \to   D^+_{\qcoh}(\O_X\mod)
\]
is not an isomorphism unless $\X$ has affine diagonal, and the derived functors are computed in the second category. (See e.g. \cite[Tag 07B5]{stacks-project}.)

 An easy example is that for an elliptic curve $E$ over a field $k$,  $f\colon \spec k \to BE$ is not cohomologically of dimension zero; this follows from pulling back by the smooth cover $f\colon \spec k \to BE$ and cohomology and base change. A more essential counterexample is the structure morphism $g \colon BE \to \spec k$, the hypercover spectral sequence associated to  $f\colon \spec k \to BE$ gives $H^1(BE,\O_{BE}) \neq 0$.

\end{remark}

\begin{proof}[Proof of Lemma \ref{L:vectorUnicity}]
  This is a direct application of \cite[Theorem 8.5.3(b)]{fga}. The obstruction to extending $\V$ to $\X_n$ lies in $H^2(\X_{n-1},\I^n\otimes \mathcal{E}nd(\V))$, which vanishes since $\X_{n-1}$ is cohomologically of dimension 0. Therefore $\V$ extends. Moreover, the isomorphism classes of extensions are parameterized by $H^1(\X_{n-1},\I^n\otimes \mathcal{E}nd(\V))$, which vanishes by the same argument, so the extension is unique.
\end{proof}

\begin{lemma}\label{L:flatness-and-tensoring}
Suppose $\phi\colon\X\to \spec A$ is a good moduli space, where $A$ is a complete Noetherian local ring with maximal ideal $\m$.   Then a quasi-coherent sheaf $\F$ on a locally Noetherian stack $\X$ is a flat $\O_\X$-module (i.e.~restricts to a flat sheaf on any smooth cover by a scheme) if and only if $\F\otimes_{\O_\X}-$ is an exact functor on $\qcoh(\X)$.
\end{lemma}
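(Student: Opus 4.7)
\medskip
\noindent\textbf{Proof plan.}

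The plan is to split the equivalence into the two directions and in each case reduce the question to an affine smooth atlas, where we have ordinary module theory available. Both directions exploit that a smooth cover $\pi\colon U\to \X$ gives an exact and faithful pullback $\pi^*\colon \qcoh(\X)\to \qcoh(U)$ (since smooth morphisms are flat and faithfully flat on the total space), and that tensor product commutes with pullback: $\pi^*(\F\otimes_{\O_\X}\G)\cong \pi^*\F\otimes_{\O_U}\pi^*\G$.

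For the direction ``flat $\Rightarrow$ tensor-exact,'' I would take an exact sequence $0\to \G'\to\G\to\G''\to 0$ in $\qcoh(\X)$, pull it back along a smooth surjection $\pi\colon U\to\X$ from a scheme, and use that $\pi^*\F$ is flat on $U$ (by hypothesis) to preserve exactness after tensoring. Since $\pi^*$ is faithful on $\qcoh$, the tensored sequence on $\X$ is exact. This is the easy direction.

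For the direction ``tensor-exact $\Rightarrow$ flat,'' I would first note that flatness of $\pi^*\F$ on one smooth affine cover $\pi\colon U=\spec R\to \X$ implies flatness on every smooth cover, by faithfully flat descent applied to a common smooth refinement. So it suffices to prove that $M:=\Gamma(U,\pi^*\F)$ is flat as an $R$-module, i.e.\ that every injection $N'\hookrightarrow N$ of $R$-modules remains injective after $\otimes_R M$. The strategy is to push forward via $\pi$ to produce a quasi-coherent injection $\pi_*\tilde{N'}\hookrightarrow \pi_*\tilde N$ on $\X$ (using that $\pi$ is representable, qcqs, so $\pi_*$ is left exact and preserves quasi-coherence), apply the hypothesis to get $\F\otimes_{\O_\X}\pi_*\tilde{N'}\hookrightarrow \F\otimes_{\O_\X}\pi_*\tilde N$, pull back to $U$, and combine the projection formula $\pi^*(\F\otimes \pi_*\tilde N)\cong \pi^*\F\otimes \pi^*\pi_*\tilde N$ with the counit $\pi^*\pi_*\tilde N\to \tilde N$ to recover the desired injectivity.

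The main obstacle is that the counit $\pi^*\pi_*\tilde N\to \tilde N$ is not an isomorphism in general (it is an isomorphism only for sections of $\pi$, and a smooth cover has none globally). The cleanest way around this, which I would adopt, is to check flatness of $M$ against only \emph{quasi-coherent ideals} $J\subseteq R$ coming (after descent along the groupoid $U\times_\X U\rightrightarrows U$) from quasi-coherent ideals $\I\subseteq \O_\X$, and then invoke the local criterion of flatness: on the locally Noetherian scheme $U$, it suffices to check $\mathrm{Tor}_1^R(M,R/J)=0$ for such $J$, and this Tor is computed by restricting to $U$ the Tor on $\X$, which vanishes by the tensor-exactness hypothesis. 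Showing that it is enough to test against ideals pulled back from $\X$ is where one must use local Noetherianity of $\X$ together with smooth descent; this is the technical step that consumes most of the work.
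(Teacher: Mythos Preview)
Your forward direction (flat $\Rightarrow$ tensor-exact) is the same as the paper's. For the converse, you correctly isolate the obstacle --- the counit $\pi^*\pi_*\G\to\G$ is not an isomorphism --- but your proposed workaround is where the argument breaks down. You want to test flatness of $M=\Gamma(U,\pi^*\F)$ only against ideals $J\subseteq R$ that descend from $\X$, and then appeal to a ``local criterion.'' No such criterion is available: the descended ideals can be a very thin class. For $\X=[\AA^1/\GG_m]$ with the scaling action and $U=\spec k[x]$, the only descended ideals are $(x^n)$; the module $k[x]/(x-1)$ has vanishing $\mathrm{Tor}_1$ against every $(x^n)$ yet is not flat. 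You might object that $\pi^*\F$ carries descent data (so is graded in this example), and indeed one can sometimes exploit this --- but making that work in general is precisely the content of the lemma you are trying to prove. Your last sentence concedes that this is ``the technical step that consumes most of the work,'' but no mechanism is given, and I do not see one that avoids the paper's idea.

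The paper's resolution is that the counit $f^*f_*\G\to\G$ is not an isomorphism, but it is \emph{split}: on the lisse-\'etale site, for $W\to U$ smooth one has $(f^*f_*\G)(W)\cong\G(U\times_\X W)$, and restriction along the diagonal $W\to U\times_\X W$ furnishes a natural section $\G\to f^*f_*\G$. With this in hand, your own strategy goes through: push forward an injection $\G\hookrightarrow\G'$ on $U$, tensor with $\F$ (exact by hypothesis), pull back to $U$ (exact since $f$ is flat), and then observe that $f^*\F\otimes\G\to f^*\F\otimes\G'$ is a retract of the resulting injection $f^*\F\otimes f^*f_*\G\hookrightarrow f^*\F\otimes f^*f_*\G'$, hence injective. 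So your setup was right up to the point where you gave up on the counit; the missing idea is that, while not invertible, it is split by the diagonal.
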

\begin{proof}
  Suppose $\F$ is flat and $\G\to \G'$ is an injection of quasi-coherent sheaves. Let $f\colon U\to \X$ be a smooth cover by a scheme. We may check that $\F\otimes \G\to \F\otimes \G'$ is injective after pulling back to $U$. Pullback respects tensor products, $f^*\G\to f^*\G'$ is injective (since $f$ is flat), and $f^*\F$ is a flat $\O_U$-module, so $f^*(\F\otimes \G)\to f^*(\F\otimes \G')$ is injective.

  For the converse, again let $f\colon U\to \X$ be a smooth cover by a scheme. We wish to prove that $f^*\F$ is flat. This may be done locally on $U$, so we may assume $U$ is a Noetherian affine scheme. The result is well-known for schemes, so it suffices to prove that $f^*\F\otimes_{\O_U}-$ is an exact functor on $\qcoh(U)$. First we claim that for any $\O_U$-module $\G$, the counit of adjunction $f^*f_*\G \to \G$ has a natural section. Indeed, let $W \to U$ be a smooth morphism. Then the map
  \[
    f^*f_*\G(W \to U) \cong \G(U\times_{\X}W \to U) \to \G(W \to U)
  \]
  has a section given by the restriction map 
  \[
    \G(W \to U) \to \G(U\times_{\X}W \to W \to U) .
  \]
  Now let $\G \to \G'$ be an injection of quasi-coherent sheaves on $U$. Since $U$ is Noetherian, $f$ is quasi-compact and quasi-separated, so $f_*\G \to f_*\G'$ is an injection of quasi-coherent $\O_\X$-modules. By assumption, 
  $\F\otimes_{\O_{\X}} f_*\G \to \F\otimes_{\O_{\X}}f_*\G'$ is an injection, and since $f$ is flat, $\phi\colon f^*(\F \otimes_{\O_{\X}}f_*\G) \to f^*(\F\otimes_{\O_{\X}}f_*\G')$ is an injection. Noting that $f^*(\F\otimes_{\O_{\X}}f_*(-)) \cong f^*\F\otimes_{\O_{U}}f^*f_*(-)$, we  get a  diagram
  \[\xymatrix{
    f^*\F\otimes_{\O_{U}} f^*f_*\G \ar[r]^\phi \ar[d]^\pi
    & f^*\F\otimes_{\O_{U}} f^*f_*\G' \ar[d]
    \\
    f^*\F\otimes_{\O_{U}}\G \ar[r]^\psi  \ar@/^/[u]^\sigma
    & f^*\F\otimes_{\O_{U}}\G' \ar@/^/[u]^{\sigma'}
  }\]
  We have that $\phi$ is injective, and $\sigma$ is injective (since it is a section of $\pi$). Since $\sigma'\circ\psi=\phi\circ\sigma$, we conclude that $\psi$ is injective.
\end{proof}

\notocsubsection{Proof of Theorem \ref{T:main}}
  Part (\ref{item:full-faithfulness}):
  For any coherent $\O_\X$-modules $\F$ and $\G$, we must show that the natural map $\hom_{\O_\X}(\F,\G)\to \hom_{\hhat \O_\X}(\hhat\F,\hhat G)$ is an isomorphism. We have that $\Hom(\F,\G)$ is coherent. By Lemma \ref{L:flat=>Hom_isom}, Lemma \ref{L:completion_is_flat}, and Remark \ref{R:pullback=completion}, the natural map $\hhat{\Hom_{\O_\X}(\F,\G)}\to \Hom_{\hhat\O_\X}(\hhat\F,\hhat \G)$ is an isomorphism. By \cite[Proposition 4.7 (iii)]{Alper:good}, the induced map on global sections is the desired isomorphism.

  \smallskip
  \notocsubsection{\myref{res}$\Rightarrow$\myref{res$'$}} This is immediate since any coherent sheaf on $\X_0$ is a coherent sheaf on $\X$.
  
  \notocsubsection{\myref{res$'$}$\Rightarrow$\myref{GAGA}} By part (\ref{item:full-faithfulness}), the completion functor is fully faithful. It remains to show that any compatible system $\fF=\{\F_n\}_{n\ge 0}$ of coherent sheaves on the stacks $\X_n$ is induced by a coherent sheaf $\F$ on $\X$. As usual, we denote by $\I$ the quasi-coherent sheaf of ideals generated by $\phi^*(\m)$.

By \myref{res$'$}, there exist a locally free sheaf $\V$ on $\X$ and a surjection $\V\to \F_0$. We inductively argue that for each $n$ this lifts to a surjection $\V \to \F_n$. The bottom row of the following diagram is exact:
  \[\raisebox{1.5pc}{$\xymatrix{
    & & & \V \ar@{->>}[d] \ar@{.>}[dl] \\
    0\ar[r]& \I^n \F_n\ar[r] & \F_n\ar[r] & \F_n/\I^n \F_n \ar[r] &
    0
  }$}\tag{$\ddag$}\]
Since $\V$ is a vector bundle, the following sequence is exact:
  \[
  0\to \Hom_{\O_\X}(\V,\I^n \F_n)\to \Hom_{\O_\X}(\V,\F_n)\to \Hom_{\O_\X}(\V,\F_n/\I^n\F_n)\to 0.
  \]
  By cohomological affineness of $\phi$, the sequence remains exact when we take global sections, so the composition map $\hom_{\O_\X}(\V,\F_n)\to \hom_{\O_\X}(\V,\F_n/\I^n\F_n)$ is surjective. Thus, there is a lift $\V\to \F_n$ as indicated by the dotted arrow in $(\ddag)$. The induced map $\V \to \F_n$ surjective by Nakayama's lemma. This gives a compatible system of maps $\{\V\to \F_m\}_{m\ge 0}$, and thus a surjective morphism $\widehat \V\to \fF$.

  Repeating the above argument for the kernel of $\widehat \V\to \fF$, we get a presentation $\widehat \W\to \widehat \V\to \fF\to 0$, where $\V$ and $\W$ are vector bundles on $\X$. By part (\ref{item:full-faithfulness}), the morphism $\widehat \W\to \widehat\V$ is induced by some $\O_\X$-module homomorphism $\W\to \V$. Let $\G$ be the cokernel of this map. By Remark \ref{R:completion-is-exact}, the top row of the following diagram is exact.
  \[\xymatrix@R-1pc{
    \widehat \W\ar[r]\ar@{=}[d] & \widehat \V\ar[r]\ar@{=}[d] & \widehat \G \ar@{.>}[d]\ar[r] & 0\\
    \widehat \W\ar[r] & \widehat \V\ar[r] & \fF\ar[r] & 0
  }\]
  The induced morphism from $\widehat \G$ to $\fF$ is therefore an isomorphism.

  \notocsubsection{\myref{res$'$}$\Rightarrow$\myref{res}}
  The above argument shows that if \myref{res$'$} holds and $\F$ is a coherent sheaf on $\X$, then there is a vector bundle $\V$ on $\X$ and a surjection $\hhat\V\to \hhat\F$. Since \myref{res$'$}$\Rightarrow$\myref{GAGA}, this map is induced by a surjection $\V\to \F$.

  \notocsubsection{\myref{GAGA}$\Rightarrow$\myref{res$'$}} First we show that if $\X$ has affine diagonal and if \myref{GAGA} holds, any $\F\in \Coh(\X)$ whose completion is a vector bundle on $\hhatX$ is a vector bundle.  By Remark \ref{R:pullback=completion}, the equivalence of categories of coherent sheaves respects tensor products, so since $\hhat\F\otimes_{\hhat\O_\X}-$ is an exact functor on $\Coh(\hhatX)$, we have that $\F\otimes_{\O_\X}-$ is an exact functor on $\Coh(\X)$. Let $\spec R\to \X$ be a smooth cover of $\X$ (note $\X$ is assumed of finite type over the Noetherian ring $A$, so it is quasi-compact).
  Then $\spec R\times_\X\spec R$ is of finite type over $\spec A$, so the projections $\spec R\times_\X\spec R\to \spec R$ are smooth, quasi-compact, and quasi-separated, so any quasi-coherent sheaf on $\X$ is the limit of its coherent subsheaves \cite[Lemma \href{http://stacks.math.columbia.edu/tag/07TU}{07TU}]{stacks-project}.
  Since $\F\otimes_{\O_\X}-$ commutes with direct limits, it is exact on the category of \emph{quasi}-coherent sheaves, so $\F$ is a flat $\O_\X$-module by Lemma \ref{L:flatness-and-tensoring}. It follows that $\F$ is a vector bundle; indeed, this can be checked smooth locally, and a flat coherent sheaf on a Noetherian affine scheme is locally free \cite[Theorem 2.9 of Chapter 1]{Milne:etaleBook}.

  Now for any coherent sheaf $\F_0$ on $\X_0$, since $\X_0$ is assumed to have the resolution property, there is a vector bundle $\V_0$ on $\X_0$ with a surjection to $\F_0$. By Lemma \ref{L:vectorUnicity}, $\V_0$ extends to a vector bundle on $\hhatX$, which by formal GAGA is the completion of a coherent sheaf $\V$ on $\X$. By the above paragraph, $\V$ is a vector bundle. Now $\V\to \V_0\to \F_0$ is a surjection. This shows that \myref{res$'$} holds.
  
\phantomsection\label{quot<=>quot'=>res}
\notocsubsection{\myref{quot}$\Leftrightarrow$\myref{quot$'$}}
It is clear that \myref{quot}$\Rightarrow$\myref{quot$'$}. Conversely, suppose
  $\X=[V/G]$ for some algebraic space $V$ and some subgroup
  $G\subseteq GL_n$. Let $U=(V\times GL_n)/G$, where $g\cdot (v,h)=(v\cdot g^{-1},g\cdot h)$ (alternatively, $U$ is the pullback of the  universal $GL_n$-torsor along the composition $[V/G]\to BG\to BGL_n$). Then $\X=[U/GL_n]$. 
  
Since $U\to \X$ is a $GL_n$-torsor, it is an
  affine morphism, and $\X\to \spec A$ is cohomologically affine, so
  $U\to \spec A$ is cohomologically affine. As $U$ has trivial stabilizers, it is an algebraic space, so by Serre's criterion
  \cite[Theorem III.2.5]{Knutson}, $U$ is an affine scheme.

\notocsubsection{\myref{quot}$\Rightarrow$\myref{res}$\Rightarrow$\myref{quot$'$}}
By \cite[Corollary 5.9]{gross:generators-arxiv} \myref{quot}$\Rightarrow$\myref{res}, and if the closed point of $\X$ has affine stabilizer, then by
\cite[Lemma 4.1]{Totaro} \myref{res}$\Rightarrow$\myref{quot$'$}.
\hfill$\square$

\begin{remark}\label{R:affine-gms+quot=>res}
  The proofs of \myref{quot}$\Leftrightarrow$\myref{quot$'$} and \myref{quot}$\Rightarrow$\myref{res}$\Rightarrow$\myref{quot$'$} apply to \emph{any} stack with affine good moduli space. Note however that \myref{res}$\Rightarrow$\myref{quot$'$} requires all closed points of the stack to have affine stabilizer.
\end{remark}

\begin{remark}
  Note that the proof of \myref{GAGA}$\Rightarrow$\myref{res$'$} shows that \myref{GAGA} implies that any coherent sheaf whose completion is a vector bundle must be a vector bundle. The hypothesis that $\X_0$ have the resolution property is not necessary for this result.
\end{remark}

\begin{remark}\label{R:X-constant=>GAGA}
  Suppose $A/\m=k$ and $A$ is a $k$-algebra (this is automatic if $k$ has characteristic zero\footnote{Every non-negative integer is non-zero in $k$, so lies in $A\smallsetminus \m$, so is invertible in $A$. This shows that $A$ is a $\mathbb Q$-algebra. By \cite[Theorem 7.7]{Eisenbud}, it is a $k$-algebra.}).
  If $\X\cong \X_0\times_{\spec k}\spec A$, then we have a morphism $s\colon \X\to \X_0$ so that $\X_0\hookrightarrow \X\xrightarrow s \X_0$ is the identity map. Any vector bundle $\V_0\in\Coh(\X_0)$ is the reduction of the vector bundle $s^*\V_0\in \Coh(\X)$. If $\X_0$ has the resolution property, then any $\F_0\in\Coh(\X_0)$ has a surjection from a vector bundle $\V_0\in\Coh(\X_0)$, so the map $s^*\V_0\to\V_0\to\F_0$ is a surjection from a vector bundle on $\X$. That is, if $\X_0$ has the resolution property, \myref{res$'$} holds.

  Note however that the condition $\X\cong \X_0\times_{\spec k}\spec
  A$ is frequently not satisfied. For example, consider the
  $j$-invariant map $j\colon \mathcal M_{1,1} \to \AA^1_{\CC}$
and let $\X \to \spec \CC\llbracket t\rrbracket$ be the pullback of $j$ to the local ring of $\AA^1_{\CC}$ at the origin.
  Since elliptic curves with $j$-invariant 0 have automorphism group $\ZZ/4\ZZ$ but generic elliptic curves have automorphism group $\ZZ/2\ZZ$, $\X$ cannot be the pullback of its special fiber.
\end{remark}

\section{Formal GAGA is finite flat local (and \'etale local) on the base}
\label{S:etaleLocal}

\begin{lemma}\label{L:etale-local}
  Suppose $\phi\colon \X\to \spec A$ is a good moduli space, where $A$ is a complete Noetherian local ring and $\phi$ is of finite type. Suppose $\spec A'\to \spec A$ is a finite flat morphism, where $A' \neq 0$ is again local (and therefore a complete Noetherian local ring), and let $\X'=\X\times_{\spec A}\spec A'$.

  If $\Coh(\X')\to \Coh(\hhatX')$ is essentially surjective, then so is $\Coh(\X)\to \Coh(\hhatX)$.
\end{lemma}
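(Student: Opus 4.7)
The plan is to effectivize $\fF \in \Coh(\hhatX)$ on $\X'$ by hypothesis and then descend along $\pi\colon \X' \to \X$. Since $\spec A' \to \spec A$ is finite étale and $\spec A$ is connected (as $A$ is local), $\pi$ is affine, faithfully flat, and surjective, so descent for coherent sheaves along $\pi$ is effective. I would pull $\fF$ back to obtain $\hhat\pi^*\fF \in \Coh(\hhat{\X'})$, use the hypothesis to produce $\F' \in \Coh(\X')$ with $\hhat{\F'} \cong \hhat\pi^*\fF$, and then descend $\F'$ to a coherent sheaf $\F$ on $\X$.

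To construct the descent datum relative to $\pi$ I would transport the canonical (trivial) descent datum on $\fF$ to $\F'$ via Theorem~\ref{T:main}(\ref{item:full-faithfulness}) applied to the fiber products. This requires fully faithfulness of completion on $\X'\times_\X\X'$ and its triple analogue, so these must be identified as good moduli spaces of finite type over complete Noetherian local rings. Note $\X'\times_\X\X' \cong \X\times_{\spec A}\spec(A'\otimes_A A')$, and the ring $A'\otimes_A A'$ is finite étale over the complete local ring $A'$. Since a finite étale algebra over a henselian local ring is a finite product of local rings, and a finite module over a complete Noetherian local ring is complete, $A'\otimes_A A'\cong\prod_j A''_j$ with each $A''_j$ complete Noetherian local. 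Therefore $\X'\times_\X\X'$ is a finite disjoint union of good moduli spaces of finite type over complete Noetherian local rings, and Theorem~\ref{T:main}(\ref{item:full-faithfulness}) applies on each component; the triple product is treated identically.

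The canonical isomorphism $\hhat{p_1^*\F'}\cong\hhat\pi^*\fF\cong\hhat{p_2^*\F'}$ over the formal completion of $\X'\times_\X\X'$ then lifts, by fully faithfulness, to a unique $\theta\colon p_1^*\F'\xrightarrow\sim p_2^*\F'$ on $\X'\times_\X\X'$; the cocycle identity for $\theta$ reduces via fully faithfulness on the triple product to the trivially satisfied cocycle identity after completion. Descent along $\pi$ then produces $\F\in\Coh(\X)$ with $\pi^*\F\cong\F'$ compatibly with $\theta$. That $\hhat\F\cong\fF$ is verified level-by-level: at level $n$, both $\F/\I^{n+1}\F$ and $\F_n$ pull back to $\pi_n^*\F_n$ compatibly with their descent data, so faithfully flat descent along $\pi_n\colon \X'_n\to \X_n$ gives $\F/\I^{n+1}\F\cong\F_n$, and these isomorphisms assemble into the required isomorphism $\hhat\F\cong\fF$.

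The main (though routine) obstacle is the bookkeeping around the decomposition of $\spec(A'\otimes_A A')$ and its triple analogue, so that Theorem~\ref{T:main}(\ref{item:full-faithfulness}) can be invoked on each component of the relevant fiber product. Everything else is a standard faithfully flat descent argument built on top of the already-established fully faithfulness.
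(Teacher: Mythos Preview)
Your proposal is correct and follows essentially the same approach as the paper: pull $\fF$ back to $\hhatX'$, effectivize there by hypothesis, decompose $\spec(A'\otimes_A A')$ into a disjoint union of spectra of complete Noetherian local rings, and use Theorem~\ref{T:main}(\ref{item:full-faithfulness}) componentwise to transport the tautological descent datum on $\fF$ to one on $\F'$, then descend. You are in fact slightly more careful than the paper, which does not explicitly mention the cocycle condition on the triple fiber product or the level-by-level verification that $\hhat\F\cong\fF$; both of these are handled exactly as you indicate.
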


\begin{remark}\label{R:finite-local=>etale-local}
  We note that any morphism of spectra of complete Noetherian local rings $\spec A'\to \spec A$ which is an \'etale cover must be finite flat. Since such a morphism is surjective, $A'/\m_A A'$ is some \'etale (and so finite) extension of $A/\m_A$. In particular, $\m_A A' = \m_{A'}$. By \cite[Proposition 18.3.2]{EGAIV.4}, there is a \emph{finite} \'etale morphism $\spec B\to \spec A$ inducing the same extension of $A/\m_A$. By the formal criterion for \'etaleness and the fact that $A'$ and $B$ are each complete with respect to $\m_A$, there are unique morphisms $\spec A'\to \spec B$ and $\spec B\to \spec A'$ over $\spec A$ lifting the isomorphism of extensions of $A/\m_A$, and these must be inverses. Thus, $\spec A'\to \spec A$ is finite flat.
\end{remark}

\begin{remark}\label{R:pullback-and-completion}
  By Remark \ref{R:pullback=completion}, completion of coherent sheaves agrees with pullback along the morphism of topoi $\iota\colon \hhatX\to \X$. It follows that pullback along $\pi\colon \X'\to \X$ commutes with completion of coherent sheaves, and that completion of coherent
  sheaves is a right exact functor. To see this, note that the hypotheses of $A' \neq 0$ and finiteness imply that $\m_{A'}^n \subseteq \m_A A' \subseteq \m_{A'}$ for some $n$. Indeed, the second containment can only fail if $\m_A$ contains a unit of $A'$, in which case $\m_A A' = A'$, so Nakayama's lemma implies $A'=0$. For the first containment, we can reduce to the case $\m_A=0$, so $A$ is a field. Then $A'$ is a finite-dimensional vector space, so $\m_{A'}^n$ stabilizes for large $n$, and it must stabilize to 0 (again by Nakayama). We can therefore regard both $\hhatX$ and $\hhatX'$ as completions with respect to the pullback of $\m_A$.

\end{remark}

\begin{proof}[Proof of Lemma \ref{L:etale-local}]
  Good moduli space morphisms are stable under base change \cite[Proposition 4.7(i)]{Alper:good} and composition, so $\X''=\X'\times_\X\X' \to \spec A'' = \spec(A'\otimes_A A')$ is a good moduli space. Let $p_1,p_2\colon \X''\to\X'$ denote the projections. While $A''$ may no longer be a local ring, $\spec
  A''$ is finite flat over $\spec A'$, so it must be a disjoint
  union $\bigsqcup \spec A''_i$, where each $A''_i$ is a
  complete local ring. Let $\X''_i =\X'' \times_{\spec A''} \spec A''_i$.

  Let $\fF\in \Coh(\hhatX)$, and let $\fF'\in \Coh(\hhatX')$ be the pullback to $\hhatX'$. By assumption, $\fF'$ is the completion of a sheaf $\F'\in \Coh(\X')$. Applying Theorem \ref{T:main}(\ref{item:full-faithfulness}) to each of the good moduli space morphisms
  $\X''_i \to \spec A''_i$, we see that the descent datum $p_2^*\fF'\xrightarrow\sim p_1^*\fF'$ is induced by a map $p_2^*\F'\xrightarrow\sim p_2^*\F'$ (note we are using Remark \ref{R:pullback-and-completion}). By fppf descent for coherent sheaves, $\F'$ is the pullback of a coherent sheaf $\F$ on $\X$. Since $\hhat \F$ and $\fF$ are defined by the same descent datum, they are isomorphic.
\end{proof}

\begin{theorem}[Formal GAGA is finite flat and \'etale local on the base]\label{T:etale-local}
  In the setup of Lemma \ref{L:etale-local}, formal GAGA holds for $\X\to\spec A$ if and only if it holds for $\X'\to\spec A'$.
\end{theorem}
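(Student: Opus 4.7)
The ``if'' direction is almost a corollary of what's already been established: Lemma~\ref{L:etale-local} supplies essential surjectivity of $\Coh(\X) \to \Coh(\hhatX)$ from that of $\Coh(\X') \to \Coh(\hhatX')$, and Theorem~\ref{T:main}(\ref{item:full-faithfulness}) supplies full faithfulness unconditionally, so \myref{GAGA} for $\X' \to \spec A'$ implies \myref{GAGA} for $\X \to \spec A$.

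For the converse, I plan to assume \myref{GAGA} for $\X \to \spec A$ and establish essential surjectivity of $\Coh(\X') \to \Coh(\hhatX')$ (full faithfulness again being Theorem~\ref{T:main}(\ref{item:full-faithfulness})) by pushing forward along the finite \'etale base change $\pi\colon \X' \to \X$. Since $\pi$ is affine, $\pi_*$ is exact, preserves coherence, and as a right adjoint commutes with the inverse limits defining formal completions; in particular $\pi_*\hhat\O_{\X'} \cong \hhat{\pi_*\O_{\X'}}$. Given $\fF' \in \Coh(\hhatX')$, the pushforward $\pi_*\fF' \in \Coh(\hhatX)$ then carries a natural $\hhat{\pi_*\O_{\X'}}$-action, and the assumed \myref{GAGA} on $\X$ produces a unique $\G \in \Coh(\X)$ with $\hhat\G \cong \pi_*\fF'$.

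The heart of the argument will be descending this $\hhat{\pi_*\O_{\X'}}$-module structure on $\hhat\G$ back to a $\pi_*\O_{\X'}$-module structure on $\G$. Here I will use that completion of coherent sheaves is pullback along $\iota\colon \hhatX \to \X$ (Remark~\ref{R:pullback=completion}), hence commutes with tensor products, so that the action map $\hhat{\pi_*\O_{\X'}} \otimes_{\hhat\O_\X} \hhat\G \to \hhat\G$ is a morphism between completions of coherent $\O_\X$-modules. Full faithfulness (Theorem~\ref{T:main}(\ref{item:full-faithfulness})) will then produce a unique $\O_\X$-linear map $\pi_*\O_{\X'} \otimes_{\O_\X} \G \to \G$, and the associativity and unit axioms will transfer by the same argument applied to each axiomatic equality of morphisms of coherent sheaves. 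Affineness of $\pi$ identifies $\Coh(\X')$ with coherent $\pi_*\O_{\X'}$-modules on $\X$, yielding $\F' \in \Coh(\X')$ with $\pi_*\F' \cong \G$, and the chain $\pi_*\hhat\F' \cong \hhat{\pi_*\F'} \cong \hhat\G \cong \pi_*\fF'$ of $\hhat{\pi_*\O_{\X'}}$-module isomorphisms should descend---level by level, since each $\pi_n\colon \X'_n \to \X_n$ is again affine---to $\hhat\F' \cong \fF'$.

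The main obstacle I foresee is purely bookkeeping: verifying that $\pi_*$ commutes with completion (straightforward because $\pi$ is affine and $\pi_*$ is a right adjoint), confirming that the module axioms are captured by equalities of morphisms of coherent sheaves so that full faithfulness applies, and tracking compatibility of module structures all the way through to the final isomorphism. The substantive content---existence of the underlying coherent $\O_\X$-module---is supplied directly by \myref{GAGA} on $\X$, after which full faithfulness reattaches the algebra structure.
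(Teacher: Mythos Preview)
Your argument is correct, and the ``if'' direction matches the paper exactly. For the converse, however, the paper takes a different route. Like you, it pushes forward $\fF'$ along the finite map $\pi$ and effectivizes to some $\F\in\Coh(\X)$, but instead of reconstructing the $\pi_*\O_{\X'}$-module structure on $\F$, it uses the counit surjection $\pi^*\hhat\F\cong\pi^*\pi_*\fF'\twoheadrightarrow\fF'$, repeats the push-effectivize step on the kernel to obtain a second surjection $\pi^*\hhat\G\twoheadrightarrow\ker$, and thus presents $\fF'$ as the cokernel of a map $\widehat{\pi^*\G}\to\widehat{\pi^*\F}$; full faithfulness then effectivizes this map, and its cokernel on $\X'$ completes to $\fF'$. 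Your approach is more structural: rather than building a two-term presentation by sheaves pulled back from $\X$, you exploit directly the equivalence between $\Coh(\X')$ and coherent $\pi_*\O_{\X'}$-modules on $\X$, reducing the problem to effectivizing a single action morphism $\pi_*\O_{\X'}\otimes_{\O_\X}\G\to\G$ (plus its axioms). The paper's presentation method avoids the bookkeeping of tracking module axioms through completion and is closer in spirit to the proof of \myref{res$'$}$\Rightarrow$\myref{GAGA}; your method is shorter once one grants the affine-pushforward equivalence and makes the role of finiteness of $\pi$ (coherence of $\pi_*\O_{\X'}$) more transparent.
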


\begin{proof}
  By Theorem \ref{T:main}(\ref{item:full-faithfulness}), both completion functors are fully faithful.

  By Lemma \ref{L:etale-local}, if the completion functor $\Coh(\X')\to \Coh(\hhatX')$ is essentially surjective, then so is $\Coh(\X)\to \Coh(\hhatX)$.

  Conversely, suppose $\Coh(\X)\to \Coh(\hhatX)$ is essentially surjective, and let
  $\fF\in\Coh(\hhatX')$. Since $\pi\colon \X' \to \X$ is finite, $\pi_*\fF\in\Coh(\hhatX)$. By assumption, $\pi_*\fF\cong \hhat\F$ for some $\F\in\Coh(\X)$. The
  composition $\pi^*\hhat\F  \xrightarrow\sim \pi^*\pi_* \fF \to \fF$ is
  a surjection. Let $\fG$ denote the kernel of this map. By the same argument, there exists a surjection $\pi^*\hhat \G \to \fG$ for some $\G\in\Coh(\X)$. Then $\fF$ is the cokernel of the map $\pi^*\hhat\G\to \pi^*\hhat\F$. By full faithfulness and Remark \ref{R:pullback-and-completion}, this map is induced by a morphism $\pi^*\G\to \pi^*\F$, and the cokernel of this map has completion $\fF$.
\end{proof}




\begin{remark}
  In the same spirit, we note that the resolution property also descends along finite flat morphisms \cite[Proposition 4.3 (vii)]{gross:generators-arxiv}.
\end{remark}

\section{Counterexamples to formal GAGA}
\label{S:counterexamples}

Recall that for a relative group scheme $G\to S$, a coherent sheaf on $BG=[S/G]$ is equivalent to a coherent sheaf on $S$ with a $G$-linearization (i.e.~a $G$-action). Pushforward along $\phi\colon BG\to S$ corresponds to taking the subsheaf of invariants; in particular, since $\O_{BG}$ corresponds to $\O_S$ with the trivial $G$-action, $\phi$ is Stein. Since the action of $G$ on $S$ is trivial, $\phi$ is universal for maps to algebraic spaces.\footnote{More generally, if $\alpha\colon G\times X\to X$ is an action of $G$ on an algebraic space $X$ and the two maps $\alpha, p_2\colon G\times X\to X$ have coequalizer $Y$ in the category of algebraic spaces, then $[X/G]\to Y$ is universal for maps to algebraic spaces.} The condition that the map be cohomologically affine is precisely the condition that $G$ is linearly reductive. Therefore $BG\to S$ is a good moduli space if and only if $G$ is linearly reductive. (We note that this holds even if $G$ is not affine; while in the usual definition of reductive of \cite{SGA3}, $G \to S$ is affine, we use the notion of \emph{linearly reductive} of \cite[Definition 12.1]{Alper:good}, which is a cohomological condition.)

Formal GAGA fails without the good moduli space condition. In the
following, we say that a morphism to an algebraic space $\X \to X$ is
a \emph{no-good moduli space} if it is universal for maps to algebraic
spaces but is not a good moduli space.

\begin{example}[Counterexample to full faithfulness for a no-good moduli space]
  Let $A=k\llbracket t\rrbracket$ for a field $k$ of characteristic not 2. Let $G=\spec k\llbracket t\rrbracket \sqcup \spec k\llparen t\rrparen$, regarded as an open subgroup of $(\ZZ/2\ZZ)_{\spec A}$. Then $\X=BG\to \spec A$ is not a good moduli space. The non-trivial 1-dimensional representation of $\ZZ/2\ZZ$ induces a non-trivial rank 1 vector bundle on $\X$ whose completion is the trivial rank 1 vector bundle on $\hhatX$ (indeed, $\hhatX\cong \spec A$), showing that the completion functor is not fully faithful.
\end{example}

\begin{example}[Counterexample to essential surjectivity for a no-good moduli space]
  Formal GAGA fails for $B\GG_a$. For a ring $R$, a line bundle on $B\GG_{a,R}$ is equivalent to a 1-dimensional representation of $\GG_{a,R}$ (i.e.~a group homomorphism $\GG_{a,R} \to\GG_{m,R}$). The formula $x \mapsto \exp(tx) =\sum_{i = 0}^{\infty} \frac{t^i}{i!}x^i$ gives a compatible family of homomorphisms $\GG_{a,\CC[t]/t^n} \to\GG_{m,\CC[t]/t^n}$ which do not lift to a homomorphism $\GG_{a,\CC\llbracket t\rrbracket} \to\GG_{m,\CC\llbracket t\rrbracket}$.
\end{example}

Formal GAGA may also fail for good moduli spaces.

\begin{example}[Counterexample to essential surjectivity with non-separated diagonal]\label{Eg:B(non-sep)}
  Let $A=k\llbracket t\rrbracket$ for a field $k$. Let $G$ be $\spec k\llbracket t\rrbracket$ with a doubled origin, regarded as a group over $\spec A$. Since $G$ is a quotient of $(\ZZ/2\ZZ)_{\spec A}$ by a flat subgroup scheme it is linearly reductive by \cite[Proposition 12.17]{Alper:good}, so $\X=BG\to \spec A$ is a good moduli space.

  Any vector bundle on $\X$ consists of a vector bundle $\V$ on $\spec A$ and a group homomorphism $G_A\to \mathrm{Aut}_A(\V)$. Since $\mathrm{Aut}_A(\V)$ is separated, such a map must factor through the trivial group. So any vector bundle on $\X$ corresponds to a vector bundle on $\spec A$ with trivial $G$-action.
  However, $\hhatX\cong B_{\spec A}(\ZZ/2\ZZ)$, so there are formal vector bundles not of this form, namely those induced by non-trivial representations of $\ZZ/2\ZZ$.
\end{example}

Even if we require separated diagonal, formal GAGA may still fail.

\begin{example}[Counterexample to essential surjectivity with separated, non-affine diagonal]
\label{ex:separatedNonAffineDiagonal}
  Let
  \[
    G' = \proj \bigl(k\llbracket t\rrbracket[x,y,z]/(zy^2-x^2(x+z) - tz^3)\bigr)
  \]
  where $t$ has degree 0 and $x$, $y$, and $z$ have degree 1. Let $G$ be the complement of the origin of
  the special fiber, with structure map $\pi\colon G\to \spec
  k\llbracket t\rrbracket$. The generic fiber is an elliptic curve
  $E\to \spec k\llparen t\rrparen$, but the special fiber is isomorphic to $\GG_m$.
By \cite[IV Theorem 5.3(c)]{Silverman:Advanced}, $G$ is a relative   group scheme over $\spec k\llbracket t\rrbracket$. We claim that $BG\to \spec k\llbracket t\rrbracket$ is a good moduli space morphism (i.e.~that taking $G$-invariants is exact on $G$-linearized coherent sheaves).

  To see this, we first note that any deformation of the group scheme $\GG_m$ is
  trivial. By \cite[Expos\'e III, Corollaire 3.9]{SGA3}, isomorphism classes of deformations of the group scheme along a square-zero ideal $I$ (if they exist) are parameterized by
  $H^{2}(\GG_m,\Lie(\GG_m)\otimes I)$, where $\Lie(\GG_m)$ is the adjoint representation and $I$ has the trivial action. The group cohomology $H^i(\GG_m,-)$ as defined in \cite[Expos\'e III, 1.1]{SGA3} is simply the \v{C}ech cohomology associated to the cover $\spec k\llbracket t\rrbracket \to B\GG_m$. Since $\GG_m$ is affine, this \v{C}ech cohomology agrees with sheaf cohomology on $B\GG_m$. Since $\GG_m$ is linearly reductive, $B\GG_m\to \spec k\llbracket t\rrbracket$ is cohomologically affine, so the higher cohomology groups vanish. Thus, the only deformation of $\GG_m$ is $\GG_m$.

  Next, any torsion $G$-linearized coherent sheaf is supported over $\spec(k[t]/t^n)$ for some $n$. That is, there is some choice of $n$ so that the given sheaf is in the essential image of $j_*$ in the diagram below.
  \[\xymatrix@C-1pc{
    (B\GG_m)_{\spec(k[t]/t^n)} \ar@{}[r]|-{\mbox{$\cong$}} \ar[d]_{\pi_n} & BG\times_{\spec k\llbracket t\rrbracket}\spec(k[t]/t^n) \ar@{^(->}[r]^-j & BG\ar[d]^\pi\\
    \spec(k[t]/t^n) \ar@{^(->}[rr]^i & & \spec k\llbracket t\rrbracket
  }\]
  Since $i$ and $j$ are affine, and $\pi_n$ is cohomologically affine, we have
  \[
    R\pi_*\circ j_* = R(\pi_*\circ j_*) = R(i_*\circ \pi_{n*})=i_*\circ \pi_{n*} = \pi_*\circ j_*.
  \]
  That is, torsion sheaves on $BG$ have trivial higher cohomology.

  Any torsion-free $G$-linearized coherent sheaf is free with trivial action. Indeed,
  it is free with some rank $r$ since $k\llbracket t\rrbracket$ is a DVR. The action of $G$ is given by some group homomorphism $G\to GL_{r,k\llbracket t\rrbracket}$. Since $G$ has proper connected generic fiber and $GL_r$ is affine, this map must be trivial over the generic point. Since $G$ is reduced and $GL_r$ is separated, the map must be trivial.

  Any $G$-linearized coherent sheaf $\F$ (with torsion subsheaf $\F^{\tor}$) fits into a $G$-equivariant short exact sequence
  \begin{equation}
   0\to \F^{\tor} \to \F\to \F/\F^{\tor}\to 0.\tag{$\ast$}
  \end{equation}
  Since $\F/\F^\tor$ is free, the following sequence is exact.
  \[
   0\to \Hom(\F/\F^\tor,\F^\tor)\to \Hom(\F/\F^\tor,\F)\to \Hom(\F/\F^\tor,\F/\F^\tor)\to 0.
  \]
  Since $\Hom(\F/\F^\tor,\F^\tor)$ is torsion, $H^1(BG,\Hom(\F/\F^\tor,\F^{\tor})) = 0$, so the sequence remains exact when we take global sections. Global sections of $\Hom(\F,\G)$ are $G$-equivariant maps from $\F$ to $\G$, so there is a $G$-equivariant splitting of the sequence ($\ast$). We have shown that any $G$-linearized coherent $k\llbracket t\rrbracket$-module $M$ decomposes into a direct sum of its torsion part $M^{\tor}$ (with trivial cohomology) and a free part $M^{\free}$ (with trivial action).

  Suppose we have a short exact sequence of linearized
  modules $0\to M''\to M\xrightarrow\phi M'\to 0$.
  We wish to show that any invariant $m'\in M'$ is the
  image of an invariant element of $M$. Since $\phi$ is surjective, we have that $m'=\phi(m_f+m_t)$, where $m_t$ is torsion and $m_f$ is invariant. Since torsion sheaves have trivial
  cohomology, any invariant torsion element which is the image of a
  torsion element is actually the image of an \emph{invariant} torsion element, so $\phi(m_t)=\phi(n_t)$ for some invariant torsion element $n_t\in M$. Then $m_f+n_t$ is invariant and $\phi(m_f+n_t)=m'$. This completes the proof that $BG\to\spec k\llbracket t\rrbracket$ is a good moduli space morphism.

  \medskip
  Now take  any vector bundle over the origin with non-trivial $\GG_m$
  action. By Lemma \ref{L:vectorUnicity}, this extends to a
  unique vector bundle on $\hhat{BG}$, but we have seen that there is no
  torsion-free coherent sheaf on $BG$ with non-trivial action on the special fiber.
\end{example}

\begin{remark}\label{R:localQuotient-conj-is-false}
  A similar example gives a counterexample to \cite[Conjecture 1]{alper:localQuotient}. Let
  \[
    G' = \proj \bigl(\CC[t,x,y,z]/(zy^2-x^2(x+z) - tz^3)\bigr)
  \]
  where $t$ has degree 0 and $x$, $y$, and $z$ have degree 1. Let $G$ be the largest subscheme of $G'$ over which the map to $\AA^1=\spec \CC[t]$ is smooth. By \cite[IV Theorem 5.3(c)]{Silverman:Advanced}, $G$ is a relative group over $\AA^1$. Let $\X=BG$. This $\X$ is finitely presented over $\spec \CC$, and the image under the quotient map $\AA^1 \to \X$ of the origin of $\AA^1$ is a closed point $x$ with stabilizer $\GG_m$. If \cite[Conjecture 1]{alper:localQuotient} were true, there would be an algebraic space $Y$ with a point $y$ and an \'etale representable morphism $f\colon [Y/\GG_m]\to \X$ sending $y$ to $x$, inducing an isomorphism of stabilizers. As $f$ is \'etale, its image is open, so the image contains some closed point of $\X$ whose stabilizer is an elliptic curve. By \cite[Proposition 3.2]{toric2}, $f$ induces finite-index inclusions of stabilizers. But no subgroup of $\GG_m$ can possibly be a finite-index subgroup of an elliptic curve.
  
  It is possible that \cite[Conjecture 1]{alper:localQuotient} holds for stacks with affine stabilizers.
\end{remark}

\begin{remark}
  Taking $\X=BG$ and $\X'=B\GG_m$ over $A=k\llbracket t\rrbracket$, Example \ref{ex:separatedNonAffineDiagonal} shows that the natural map
  \[
    \hom_A(\X,\X') \to \hom_A(\hhatX,\hhatX')
  \]
  is not necessarily an equivalence of categories. The complex analytic analogue of this natural map is an equivalence of categories if $\X$ is a proper Deligne-Mumford
  stack and $\X'$ is either a quasi-compact algebraic stack with
  affine diagonal \cite[Theorem 1.1]{lurie:tannaka} or a locally of finite type
  Deligne-Mumford stack with quasi-compact and quasi-separated diagonal \cite[Theorem 1]{hall:GAGA}.
\end{remark}

\begin{remark}\label{R:affine-diag=>GAGA}
  It is difficult to imagine an example of a stack $\X$ with affine
  diagonal and good moduli space $\spec A$ which is not a quotient
  stack (i.e.~does not satisfy \myref{quot$'$}) \'etale locally on $\spec A$. Likely candidates, such as non-trivial $\GG_m$-gerbes, do not work (see Remark \ref{R:gerbes}). If
  no such stack exists, then Theorems \ref{T:main} and \ref{T:etale-local} show that formal GAGA holds provided that $\X$ has affine diagonal.
\end{remark}
\begin{conjecture}\label{conjecture}
  Suppose $\phi\colon \X\to \spec A$ is a good moduli space morphism, where $A$ is a complete Noetherian local ring and $\phi$ is of finite type. If $\X$ has affine diagonal, then the completion functor $\Coh(\X)\to \Coh(\hhatX)$ is an equivalence of categories. (Note: a forthcoming result implies this conjecture holds; see Remark \ref{R:AHR}.)
\end{conjecture}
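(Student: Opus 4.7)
The plan is to reduce the conjecture to Theorem \ref{T:main} by establishing a local structure theorem of Luna-slice type. Since $\X$ has affine diagonal, the (unique) closed point of $\X$ has affine stabilizer, so by Theorem \ref{T:main} it suffices to verify condition \myref{quot$'$}. Moreover, by Theorem \ref{T:etale-local}, formal GAGA is \'etale local on $\spec A$, so we may pass to any finite \'etale cover $\spec A' \to \spec A$ with $A'$ again a complete Noetherian local ring. The conjecture therefore reduces to the following structural claim: every finite-type $\X$ with affine diagonal and good moduli space $\spec A$ becomes, after such an \'etale base change, isomorphic to $[U/GL_n]$ for some affine scheme $U$ and some $n$.

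The strategy for this structural claim follows the template of Remark \ref{R:template}. Let $x \in \X$ be the unique closed point, with affine linearly reductive stabilizer $G_x$. Choose a closed embedding $G_x \hookrightarrow GL_n$; this determines an associated $GL_n$-torsor $U_0 \to \X_0$ with $U_0$ an affine scheme (the fiber over $x$ being $GL_n/G_x$, which is affine since $G_x$ is reductive). The deformation step goes through easily: Lemma \ref{L:vectorUnicity} lifts the rank-$n$ vector bundle underlying this torsor uniquely to $\X_n$ for each $n$, and the reductions of structure group/torsor structures are then controlled by higher cohomology groups of quasi-coherent sheaves on $\X_n$, all of which vanish by cohomological affineness. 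Assembling these yields a formal $GL_n$-torsor $\widehat U \to \hhatX$ with $\widehat U$ an adic formal scheme, affine because affineness of $U_0 \to \X_0$ and of each successive thickening is detected on the reduced fiber. Effectivizing this formal torsor to an actual $GL_n$-torsor $U \to \X$ with $U$ affine (using formal GAGA applied to its structure sheaf as a coherent sheaf of algebras) and descending to an \'etale neighborhood of the closed point of $\spec A$ via Artin approximation then yields \myref{quot$'$}.

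The hard part is the effectivization step, and it conceals a serious circularity. Theorem \ref{T:main} delivers effectivization only after \myref{res$'$} is known, which in our situation requires that $\X_0$ have the resolution property — but knowing this for $\X_0$ is essentially equivalent to the quotient-stack structure that we are trying to produce in the first place. The most plausible route around the circularity is to prove directly, under the affine diagonal hypothesis, that every coherent sheaf on $\X_0$ admits a surjection from a vector bundle, using only the cohomological affineness of $\X_0 \to \spec A/\m$ together with the affineness of its diagonal. This would require some global input not exploited in Theorem \ref{T:main}, for instance constructing sufficiently many $G_x$-representations via a slice through the closed point of $\X_0$ and propagating them across $\X_0$ using the fact that $\X_0$ is cohomologically affine over a field. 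Once the resolution property for $\X_0$ is established, Theorem \ref{T:main} yields \myref{GAGA} directly and the formal torsor construction above promotes this to \myref{quot$'$}, completing the proof.
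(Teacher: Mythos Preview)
The statement is a \emph{conjecture}, and the paper does not prove it. The only argument the paper offers is conditional (Remark~\ref{R:Jarod-conj=>conjecture}): assuming the strong form of Alper's local quotient conjecture for stacks with affine diagonal, one obtains an \'etale morphism $[\spec R/G_x]\to\X$ which, after passing to a finite \'etale cover of $\spec A$, becomes an isomorphism; then $\X$ is a quotient stack after base change and Theorems~\ref{T:main} and~\ref{T:etale-local} finish the job. Your plan follows the same reduction template and reaches the same bottleneck.

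You correctly identify the circularity at the effectivization step. But your proposed escape route---proving directly that $\X_0$ has the resolution property from affine diagonal plus cohomological affineness over a field---is, via Totaro's theorem (used in the paper for \myref{res}$\Leftrightarrow$\myref{quot$'$} under the affine-stabilizer hypothesis), exactly Question~\ref{Q:non-quotient_with_one_point_GMS}, which the paper poses as open. The paper discusses candidate counterexamples ($\GG_m$-gerbes, moduli of marked prestable curves) and why none settles the question. Your sketch of ``constructing sufficiently many $G_x$-representations via a slice through the closed point of $\X_0$'' is essentially a restatement of what Alper's local structure conjecture would provide, and you have not supplied an independent argument for it. So the proposal is not a proof: it reduces the conjecture to another open problem of comparable difficulty.

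A smaller technical point: the structure sheaf of a $GL_n$-torsor over $\X$ is not a \emph{coherent} $\O_\X$-algebra (the fibers have positive dimension), so formal GAGA for coherent sheaves does not apply to it as stated. The correct object to effectivize is the associated rank-$n$ vector bundle, which is coherent---but that returns you to the same circularity you already flagged.
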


 Formal GAGA may hold even if $\X$ does not have affine diagonal, but it is usually uninteresting. For example, for any elliptic curve $E \to \spec A$, formal GAGA holds for $BE \to \spec A$ since all coherent sheaves on $BE$ are pulled back from $\spec A$.

\section{Application to the local quotient structure of good moduli spaces}
\label{potentialApplication}

Recall that a stack $\X$ is a \emph{quotient stack} if it is
the stack quotient of an algebraic space by a subgroup of $GL_n$ for some $n$ (i.e.~if \myref{quot$'$} holds).

\begin{proposition}\label{P:localQuotient}
 Let $\phi\colon \X \to X$ be a stack over $X$ with affine diagonal,  with $\phi$ of finite type and $X$ a locally Noetherian scheme. Assume that $\phi$ is a good moduli space. Let $x \in X$ be a point such that the fiber $\X_0$ over $x$ is a quotient stack. Suppose that formal GAGA holds for
 $\ttilde\X = \X \times_X \spec \hhat \O_{X,x} \to \spec \hhat \O_{X,x}$. 
 Then there exists an \'etale neighborhood $X' \to X$ of $x$ such that $\X\times_X X'$ is a quotient stack.
\end{proposition}

\begin{remark}
To apply the proposition to the case where $X$ is an algebraic space and $x$ is a topological point, one would find an \'etale neighborhood $U\to X$ with $U$ a scheme and a point $u\in U$ which maps to $x$, then apply the proposition to $\X\times_X U\to U$. Any two \'etale covers have a common refinement, so by Theorem \ref{T:etale-local}, the formal GAGA hypothesis is satisfied for one \'etale cover if and only if it is satisfied for any \'etale cover. (Also see Remark \ref{R:GAGA-use-in-localQuotient}.)
\end{remark}




\begin{proof}
  The question is \'etale local on $X$, and by Theorem \ref{T:etale-local} the hypothesis is \'etale local on $X$, so we may assume that $X = \spec R$ is an
  affine scheme. Let $X^{\h} = \spec R^{\h}$, where $R^{\h}$ is the strict henselization
  of $R$ at $x$, and let $X^{\loc} = \spec{\hhat{R^{\h}}}$. Let $\X^{\loc}$ and $\X^{\h}$ denote the pullback of $\X$ to $X^{\loc}$ and $X^{\h}$, respectively. For a sheaf $\F$ on $\X$ (or $\X^h$), let $\F^{\h}$ and $\F^\loc$ denote the pullback of $\F$ to $\X^{\h}$ and $\X^\loc$, respectively. Let $\hhatX$ be the completion of $\X^\loc$ with respect to the maximal ideal of $\hhat{R^h}$.

\[\xymatrix@R-1.4pc{
  \hhatX \ar[r] \ar[dd] & \X^{\loc}\ar[r]\ar[dd] & \X^{\h}\ar[r]\ar[dd] & \X' \ar[r]\ar[dd] & \X\ar[dd]\\ \\
  X^{\loc} \ar@{}[r]|{\mbox{$=$}} & X^{\loc}\ar@{}[d]|{\parallel}\ar[r] & X^h\ar@{}[d]|{\parallel}\ar[r] & X_j\ar@{}[d]|{\parallel}\ar[r] & X\\
   & \spec\hhat{R^{\h}}& \spec R^h& \spec R_j
}\]
  
  The closed substack $\X_0\subseteq \ttilde\X$ is a quotient stack, so its unique closed point has affine stabilizer, and it has the resolution property by Remark \ref{R:affine-gms+quot=>res}. By assumption, \myref{GAGA} holds for $\ttilde\X\to \spec\hhat\O_{X,x}$, so by Theorem \ref{T:main}(\ref{item:existence}), $\ttilde\X=[U/GL_n]$ for an affine scheme $U$. Since $\spec\hhat{R^h}\to\spec\hhat\O_{X,x}$ is an affine morphism, $\X^\loc\to \ttilde\X$ is affine, so $U^\loc=U\times_{\ttilde\X}\X^\loc$ is an affine scheme and $\X^\loc=[U^\loc/GL_n]$. By Remark \ref{R:affine-gms+quot=>res}, $\X^\loc$ has the resolution property.

  Next we show that $\X^{h}$ has the resolution property.
  Let $\F$ be a coherent sheaf on $\X^{\h}$. By the previous paragraph, there is a vector bundle $\V^{\loc}$ on $\X^{\loc}$ with a surjection to $\F^{\loc}$.
  By \cite[Proposition 4.18(i)]{LaumonMB:champs}, the stack of rank $n$ vector bundles on $\X$, $\Hom(\X, BGL_n)$, is locally of finite presentation over $X$.
  By Artin approximation \cite[Theorem 1.12]{artin:approximation}, there exists a vector bundle
  $\V$ on $\X^{\h}$ such that the pullback of  $\V$ to $\X_0$ is the
  same as the pullback of $\V^{\loc}$ to $\X_0$. By Lemma
  \ref{L:vectorUnicity} and Theorem
  \ref{T:main}(\ref{item:full-faithfulness}), the pullback of $\V$ to
  $\X^{\loc}$ is isomorphic to $\V^{\loc}$. Since $\Hom(\V,\F)$ is
  locally of finite presentation and the substack of surjections is
  open in $\Hom(\V,\F)$ \cite[Lemma 2.2.2]{lieblich:stackOfCoherent}, the substack of surjections is locally of finite presentation.
  By Artin approximation, there exists a surjection $\V \to \F$. This proves that $\X^h$ has the resolution property.

  Let $\X^h_0$ denote the closed fiber of $\X^h$. The morphism $\X^h_0\to \X_0$ is a representable morphism to a quotient stack. If $\X_0=[U/G]$, then $\X^h_0=[(U\times_{\X_0}\X^h_0)/G]$, so $\X^h_0$ is a quotient stack. In particular, the closed point of $\X^h$ has affine stabilizer, so by Remark \ref{R:affine-gms+quot=>res}, $\X^{\h}=[P^h/GL_n]$ for some affine scheme $P^h$. The $GL_n$-torsor $P^h\to \X^h$ corresponds to a representable map $p^h\colon \X^{\h} \to BGL_n$.
  Since $\Hom(\X, BGL_n)$ is locally of finite presentation over $X^h$ and since $R^{\h} = \varinjlim R_i$, where the limit runs over all \'etale neighborhoods $X_i=\spec R_i\to \spec R$ of $x$, we have that $p^h$ is the pullback of some $p_i\colon \X_i=\X\times_X X_i\to BGL_n$.
  Let $\Q_i \to \X_i$ be the corresponding $GL_n$-torsor.
  To finish the proof, it suffices to show there exists an \'etale neighborhood $X_j\to X_i$ such that $\Q_j=\Q_i\times_{X_i}X_j$ is an affine scheme.

  Since $X$ is locally Noetherian, $X^h$ is Noetherian \cite[Proposition 18.8.8(iv)]{EGAIV.4}. As $P^h$ is of finite type over $X^h$, it is finitely presented over $X^h$, so there exists an \'etale neighborhood $X_{j_0} \to X_i$ and an affine scheme $P_{j_0}$ over $X_{j_0}$ such that $P^h \cong P_{j_0} \times_{X_{j_0}} X^{\h}$.
  Let $\Q_{j_0} = \Q_i \times_{X_i}X_{j_0}$. By \cite[Proposition 4.18(i)]{LaumonMB:champs},
$\Hom_{X_{j_0}}(\Q_{j_0},P_{j_0})$ and $\Hom_{X_{j_0}}(P_{j_0},\Q_{j_0})$ are locally of finite presentation over $X_{j_0}$, so there exists an \'etale neighborhood $X_{j_1}  \to X_{j_0}$ such that the isomorphism $f\colon \Q_i\times_X X^\h = \Q_{j_0}\times_{X_{j_0}} X^{\h} \to P_{j_0}\times_{X_{j_0}} X^{\h} = P^h$ and its inverse $g$ are the pullbacks
of maps $f_1$ and $g_1$ which are defined over $X_{j_1}$. By \cite[Proposition 4.18(i)]{LaumonMB:champs}, there is an \'etale neighborhood $X_{j}  \to X_{j_1}$ such that the compositions $f_1 \circ g_1$ and $g_1 \circ f_1$ pull back to the identities over $X_j$. This shows that $\Q_{j}\cong P_{j}$ is an affine scheme, as desired.
\end{proof}

\begin{remark}\label{R:GAGA-use-in-localQuotient}
  In the proof of Proposition \ref{P:localQuotient}, the formal GAGA
  hypothesis is only used to show that $\X^{\loc}$ has the resolution
  property. If this
  can be obtained in some other way (e.g.~if formal GAGA holds
  for $\X^{loc} \to \spec \hhat{\O_{X,x}^{\h}}$), the rest of this proof works as
  above.
\end{remark}


Because of results like Proposition \ref{P:localQuotient}, and more generally because of the strategy presented in Remark \ref{R:template}, it is desirable to have a classification of stacks which have a point as a good moduli space. It is not known which such stacks are quotient stacks.

\begin{question}\label{Q:non-quotient_with_one_point_GMS}
  Does there exist a good moduli space morphism $\Y \to \spec k$, with $k$ a
  separably closed field, such that $\Y$ has affine diagonal 
 but is not a quotient stack? (Note: a forthcoming result resolves this question in the negative; see Remark \ref{R:AHR}.)
\end{question}


\begin{remark}
\label{R:gerbes}
  One natural source of
  examples is non-trivial gerbes. By \cite[Example 3.12]{EHKV} there
  are $\GG_m$-gerbes which are not quotient stacks. If $\X$ is a
  $\GG_m$-gerbe over a Noetherian scheme $X$, then $\X$ is a quotient
  stack if and only if its class in $H^2(X,\GG_m)$ is in the image of
  the Brauer map $\Br(X) \to H^2(X,\GG_m)$  \cite[Theorem 3.6]{EHKV}. However, if $X = \spec A$, where $A$ is a complete local ring (e.g.~a field), then by
  \cite[Corollary IV.2.12]{Milne:etaleBook},\footnote{Note that in
    contrast to \cite{EHKV},  Milne defines $\Br'(X) = H^2(X,\GG_m)$ (p.~147).}
  the natural map  $\Br(X) \to H^2(X,\GG_m)$ is an isomorphism, so any $\GG_m$-gerbe over $\spec A$ is a quotient stack.
\end{remark}

\begin{remark}
  Another candidate counterexample is $\fM^{\leq m}_0$, the moduli stack of genus 0 prestable (i.e.~nodal) curves with at most $m$
  nodes. Over any field $K$ and for any $m\ge 2$, $\fM^{\leq m}_0$ is not a quotient stack \cite[Proposition 5.2]{kresch:flatteningPreStable}. The map $\fM^{\leq m}_0 \to \spec K$ is universal for maps to algebraic spaces. However, it is not a good moduli space map since closed points of $\fM^{\leq m}_0$ can
  have non-reductive stabilizers: each outer leaf of any tree $T$ of
  smooth rational curves contributes a copy of $\Aut \AA^1 \cong \GG_m
  \ltimes \GG_a$ to $\Aut T$. But by \cite[Proposition
  12.14]{Alper:good}, the stabilizers at closed points of a stack which has a good moduli space are linearly reductive.

  A promising variant is $\fM^{\leq m}_{0,n}$, the moduli stack of marked genus 0
  prestable curves with $n$ marked points and at most $m$ nodes, such that each component has at least two marks/nodes. The closed points of this stack have linearly reductive stabilizers, and the stack is non-empty for $n\ge 2$. For $m\ge 2$, we sketch a modification of Kresch's argument to show that $\fM^{\le m}_{0,n}$ is not a quotient stack. 
There is an open immersion $\fM^{\le 2}_{0,n}\subseteq \fM^{\le m}_{0,n}$, so it suffices to show $\fM^{\le 2}_{0,n}$ is not a quotient stack.
  There is a representable morphism $\ttilde\fM^{\le 2}_{0,n}\to \fM^{\le 2}_{0,n}$ from the stack in which the points are \emph{labeled}, so it suffices to check that the former is not a quotient stack. There is a morphism $\ttilde\fM^{\le 2}_{0,2}\to \ttilde\fM^{\le 2}_{0,n}$ given by adding points in a prescribed fashion, which is a trivial $\GG_m$-gerbe over its image (for $n\ge 3$), so it suffices to check $\ttilde\fM^{\le 2}_{0,2}$ is not a quotient stack. A straightforward modification of the proof of \cite[Proposition 5.2]{
kresch:flatteningPreStable} shows that $\ttilde\fM^{\le 2}_{0,2}$ is not a quotient stack.

  For $n\ge 3$ and $m\ge 2$, there are are curves which isotrivially degenerate to multiple closed points, so $\fM^{\le m}_{0,n}$ cannot have a good moduli space by \cite[Proposition 4.16(iii)]{Alper:good}. The stack $\fM^{\le 2}_{0,2}$ has a unique closed point (topologically, it is a chain of 3 points) and the map to a point is universal for maps to algebraic spaces. If this map were a good moduli space morphism, it would answer Question \ref{Q:non-quotient_with_one_point_GMS} affirmatively.
\end{remark}

\begin{remark}\label{R:Jarod-conj=>question}
  Suppose $\X\to \spec A$ is a good moduli space as in \S\ref{S:terminology}, with $k=A/\m$ separably closed. Suppose $\X$ has affine diagonal, and satisfies \cite[Conjecture 1]{alper:localQuotient} (by Remark \ref{R:localQuotient-conj-is-false}, we cannot expect this unless $\X$ has affine stabilizers). Let $G_x$ be the stabilizer of the unique closed point $x$ of $\X$. Then there is a representable \'etale morphism $f\colon \W=[U/G_x]\to \X$ and a point $w \in \W(k)$ such that the induced map $\Aut_{\W(k)}(w) \to \Aut_{\X(k)}(x)=G_x$ is an isomorphism. Suppose the strong form of this conjecture holds (i.e.~ that we may take  $U = \spec R$ to be affine; see \cite[second paragraph after Conjecture 1]{alper:localQuotient}).
  
  (This argument was suggested to us by Jarod Alper.) Let $\W=[\spec R/G_x]\to \X$ be as above. By \cite[Theorem 5.1]{Alper:good}, the induced map on good moduli spaces $\spec R^{G_x}\to \spec A$ is \'etale. Since $A$ is complete with separably closed residue field, the component of $\spec R^{G_x}$ containing the image of $w$ must be isomorphic to $\spec A$, so after shrinking $\spec R$, we may assume $f\colon \W\to \X$ induces an isomorphism of good moduli spaces.

  We claim that $f$ is an isomorphism. Since $f$ is \'etale, its image is open. Any open set containing the unique closed point $x$ of $\X$ is all of $\X$, so $f$ is an \'etale cover. We may check that a morphism is an isomorphism \'etale locally on the base, so it suffices to show that the projection $p_1\colon \W\times_\X\W\to \W$ is an isomorphism. By \cite[Proposition 4.7(i)]{Alper:good} $\W\times_\X\W$ has good moduli space $\spec A\times_{\spec A}\spec A=\spec A$, so it has a unique closed point. The diagonal $\W\to \W\times_\X\W$ has this closed point in its image. As the diagonal is a section of an \'etale morphism, it is an open immersion, so it is an isomorphism.

  The strong form of \cite[Conjecture 1]{alper:localQuotient} for stacks with affine diagonal therefore answers Question \ref{Q:non-quotient_with_one_point_GMS} negatively: if $A=k$ is a separably closed field, the above argument shows that $\X$ is a quotient stack.
\end{remark}

\begin{remark}\label{R:Jarod-conj=>conjecture}
  If the strong form of \cite[Conjecture 1]{alper:localQuotient} for stacks with affine diagonal is true, the following argument shows that Conjecture \ref{conjecture} is true. In this case, the formal GAGA hypothesis in Proposition \ref{P:localQuotient} may be replaced by the hypothesis that $\X$ has affine diagonal.

  Let $G_x$ denote the stabilizer of the closed point $x$ of $\X$. By \cite[Proposition 12.14]{Alper:good}, $G_x$ is linearly reductive. Let $\W=[\spec R/G_x]\to \X$ and $w\in \W$ be as in \cite[Conjecture 1]{alper:localQuotient}. By \cite[Theorem 5.1]{Alper:good}, the map on good moduli spaces $\spec R^{G_x}\to \spec A$ is \'etale, so after shrinking $\spec R$, we may assume $\spec R^{G_x}\to \spec A$ is a finite \'etale extension. As $\W$ and $\X\times_{\spec A}\spec R^{G_x}$ are both \'etale over $\X$, the induced morphism $\W\to \X\times_{\spec A}\spec R^{G_x}$ is \'etale. This morphism induces an isomorphism on good moduli spaces, and the image contains the unique closed point of $\X\times_{\spec A}\spec R^{G_x}$.
  By the argument in Remark \ref{R:Jarod-conj=>question}, the map is an isomorphism. As $\W$ is a quotient stack, formal GAGA holds for $\W\to \spec R^{G_x}$ by Theorem \ref{T:main}(\ref{item:existence}). By Theorem \ref{T:etale-local}, formal GAGA holds for $\X\to \spec A$.
\end{remark}


\newcommand{\etalchar}[1]{$^{#1}$}

\end{document}



http://mathoverflow.net/questions/3911/constructing-a-degeneration-as-a-group-scheme-of-g-m-to-g-a
http://mathoverflow.net/questions/4062/can-hom-gpg-h-fail-to-be-representable-for-affine-algebraic-groups
http://mathoverflow.net/questions/727/algebraic-group-g-vs-algebraic-stack-bg
http://mathoverflow.net/questions/1809/does-every-morphism-bg-bh-come-from-a-homomorphism-g-h
http://mathoverflow.net/questions/56112/are-representations-of-a-linearly-reductive-group-discretely-parameterized
http://mathoverflow.net/questions/27/is-there-an-example-of-an-algebraic-stack-whose-closed-points-have-affine-stabili
http://mathoverflow.net/questions/2710/can-isomorphisms-of-schemes-be-constructed-on-formal-neighborhoods
http://mathoverflow.net/questions/570/deformation-theory-of-representations-of-an-algebraic-group
http://mathoverflow.net/questions/56862/is-there-a-non-quotient-stack-with-affine-stabilizers-whose-good-moduli-space-is
http://mathoverflow.net/questions/2710/can-isomorphisms-of-schemes-be-constructed-on-formal-neighborhoods